\documentclass[journal,twoside,web]{ieeecolor}
\usepackage{generic}
 \def\BibTeX{{\rm B\kern-.05em{\sc i\kern-.025em b}\kern-.08em
     T\kern-.1667em\lower.7ex\hbox{E}\kern-.125emX}}
\usepackage{cite}
\usepackage{amsmath,amssymb,amsfonts}
\usepackage{algorithmic}
\usepackage{graphicx}
\usepackage{algorithm,algorithmic}
\usepackage{hyperref}
\usepackage{textcomp}
\usepackage{cite}
\usepackage{float}
\usepackage{amsmath,amssymb,amsfonts}
\usepackage{algorithmic}
\usepackage{mathrsfs}
\usepackage{graphicx}
\usepackage{textcomp}
\usepackage[amsmath,thmmarks]{ntheorem}
\usepackage{hyperref}
\usepackage{enumerate}
\hypersetup{pdfborder={0 0 0},}

\usepackage{algorithm}
\usepackage{algorithmic}
 \theoremheaderfont{\normalfont\bfseries} 
\theorembodyfont{\normalfont\itshape}    
\theoremseparator{.}                     

\newtheorem{remark}{Remark}
\newtheorem{theorem}{Theorem}
\newtheorem{lemma}{Lemma}

\newtheorem{assumption}{Assumption}

\newcommand{\ADD}[1]{\textcolor{black}{{#1}}}

\def\BibTeX{{\rm B\kern-.05em{\sc i\kern-.025em b}\kern-.08em
    T\kern-.1667em\lower.7ex\hbox{E}\kern-.125emX}}
\begin{document}
  \title{Distributed and Decentralized Optimization Algorithms via Consensus ALADIN}
\author{Xu Du, \IEEEmembership{Member, IEEE}, Jingzhe Wang, \IEEEmembership{Student Member, IEEE}, Karl~H.~Johansson, \IEEEmembership{Fellow, IEEE}, and Apostolos I. Rikos$^*$, \IEEEmembership{Member, IEEE}
\thanks{$^*$Corresponding author.}
	\thanks{Xu Du and Apostolos I. Rikos are with the Artificial Intelligence Thrust of the Information Hub, The Hong Kong University of Science and Technology (Guangzhou), Guangzhou, China. 
    Apostolos I. Rikos is also affiliated with the Department of Computer Science and Engineering, The Hong Kong University of Science and Technology, Clear Water Bay, Hong Kong, China. E-mails: {\tt~\{michaelxudu;apostolosr\}@hkust-gz.edu.cn}. 
            }
            \thanks{Jingzhe Wang is with School of Computing and Information, University of Pittsburgh, Pittsburgh, PA, USA. E-mail: \texttt{jiw148@pitt.edu}.}
            \thanks{Karl H.~Johansson is with the Division of Decision and Control Systems, KTH Royal Institute of Technology, SE-100 44 Stockholm, Sweden. 
    He is also affiliated with Digital Futures, SE-100 44 Stockholm, Sweden. 
    E-mail:{\tt~kallej@kth.se}.
            }
            \thanks{Preliminary results of this work were presented at the 2025 American Control Conference \cite{Du2025ACC} and the 2025 IEEE Conference on Decision and Control \cite{Du2025CDCA}. The current version of our paper includes: (i) the complete proof of the algorithm proposed in \cite{Du2025CDCA}; (ii) second‑order variants of the algorithms in \cite{Du2025CDCA} for non‑convex problems; and (iii) extended comparisons with algorithms from the literature.}
\thanks{The work of X.D. and A.I.R. was supported by the Guangzhou-HKUST(GZ) Joint Funding Scheme (Grant No. 2025A03J3960). The work of A.I.R. was also supported by the Guangdong Provincial Project (Grant No. 2024QN11G109).} 
}

\maketitle

\begin{abstract}
Distributed optimization has found widespread applications in smart grids, optimal control, and machine learning. 
This paper studies distributed consensus optimization. 
We extend the Augmented Lagrangian-based Alternating Direction Inexact Newton (ALADIN) framework to propose Consensus ALADIN (C-ALADIN) with a central coordinator, which directly handles consensus constraints. 
Our C-ALADIN algorithm admits both a first-order variant and a second-order variant that employs a Hessian approximation, avoiding direct transmission of second-order information while preserving fast local convergence.
We then develop a decentralized version of C-ALADIN that operates over directed graphs with quantized communication, using a finite-time coordination protocol. 
For both versions, we establish global convergence guarantees for convex problems and local convergence guarantees for non-convex problems. 
For the decentralized case, the iterates converge to a neighborhood of the optimum determined by the quantization level. 
Numerical results demonstrate that our methods retain fast convergence while substantially reducing communication and computational costs compared to existing decentralized approaches.
\end{abstract}

\begin{IEEEkeywords}
Distributed consensus optimization, Consensus ALADIN, quantization, finite-time consensus.
\end{IEEEkeywords}

\section{Introduction}
\label{sec:introduction}

Distributed and decentralized optimization have attracted significant attention, driven by advances in distributed control systems \cite{stomberg2025decentralized,jiang2026distributed,wu2025time}, federated learning \cite{zhou2026preconditioned,zhou2023federated}, and distributed power systems \cite{steven2026distributed,Du2019}. 
The growing need to solve large-scale problems with massive datasets and heterogeneous objectives motivates the use of distributed and decentralized optimization algorithms.

In distributed optimization, data are partitioned across multiple agents. 
Each agent solves a local subproblem and exchanges information either with a central coordinator or directly with its neighboring agents. 
When no coordinator is present and communication relies solely on peer-to-peer interactions, the setting is specifically referred to as decentralized optimization. Distributed optimization problems are typically categorized into two main classes: (i) distributed resource allocation optimization, and (ii) distributed consensus optimization \cite{Du2025ACC}. 
In the first class, the objective function has a finite-sum structure, and local variables are coupled through affine equality or inequality constraints, potentially with heterogeneous dimensions \cite{2024_doostmohammadian_rikos_Johansson_survey}. In the second class, problems also exhibit a sum-structured objective, but the local variables are required to reach full consensus \cite{boyd2011distributed}. 
From an algorithmic perspective, as surveyed in \cite{boyd2007notes,2024_doostmohammadian_rikos_Johansson_survey}, distributed and decentralized optimization methods can be divided into two types: (i) primal decomposition based algorithms and (ii) dual decomposition based algorithms. 
Primal decomposition based optimization includes zeroth-order \cite{xu2024quantized}, first-order \cite{nedic2009distributed,Shi2015,ioffe2015batch,mcmahan2017communication,wu2026nesterov,dist_structure4_lihuaxie,pushi,jiang2022fast}, and second-order methods \cite{wang2018giant,zhang2021newton,dinc2024incremental}. In contrast, dual decomposition based approaches build upon fundamental frameworks such as Dantzig-Wolfe decomposition \cite{dantzig1960decomposition}, the Alternating Direction Method of Multipliers (ADMM) \cite{boyd2011distributed}, and the Augmented Lagrangian Alternating Direction Inexact Newton (ALADIN) method \cite{Houska2016,Houska2021}. Dual decomposition generally achieves faster convergence and higher solution accuracy than primal methods \cite[Sec.~I]{ling2015dlm}. This work therefore focuses on dual decomposition based approaches for distributed and decentralized consensus optimization, specifically on the ALADIN method \cite{Houska2016,Houska2021}. 
This algorithm has demonstrated favorable convergence performance in distributed resource allocation optimization. 
However, its application to consensus optimization faces structural challenges, as we discuss next. 
Extending ALADIN to this setting therefore remains largely unexplored in the literature.

\vspace{-3mm}
\subsection{Literature Review.} 
The ALADIN algorithm \cite{Houska2016} was originally introduced to improve the numerical convergence speed of ADMM \cite{boyd2011distributed,notarnicola2025passivity,du2026affine} for distributed resource allocation optimization, specifically its parallelizable primal splitting variant \cite{wang2013solving}, \cite[Algorithm~1]{Houska2016}, and to strengthen its convergence guarantees for non-convex problems. It achieves this by integrating the coupled Sequential Quadratic Programming (SQP) \cite{Nocedal2006} into the augmented Lagrangian framework \cite{Houska2016,Houska2021,Engelmann2019,Engelmann2020,Du2019,wang2025aladin,Du2023_Arxiv,Du2025ACC,du2025convergence,Du2023B,han2026mix,Shi2022}.
In convergence theory, ALADIN-type algorithms achieve local convergence guarantees for smooth non-convex problems and global convergence guarantees for convex problems \cite{Houska2021,Du2025ACC}.
ALADIN has subsequently been applied to model predictive control (MPC) \cite{Shi2022}, moving horizon estimation (MHE) \cite{wu2025time,wang2026lightweight}, and distributed optimal power flow (OPF) \cite{Engelmann2019,Du2019}. In each case, the problem suffers from the curse of dimensionality: for MPC and MHE, due to a long horizon, and for OPF, due to a large number of buses. By recasting these problems into distributed formulations via time-splitting for MPC/MHE or spatial decomposition for OPF, and solving them with ALADIN, the dimensionality burden is effectively alleviated. 
Inspired by ALADIN, the PaDOA algorithm \cite{Villanueva2021} was developed for distributed mixed-integer programming, integrating outer approximation into the distributed optimization framework, achieving global convergence for convex problems and finding applications in temperature control systems. 
Despite the success of ALADIN in distributed resource allocation optimization problems, its application to distributed consensus optimization faces several challenges: (i) ALADIN requires each variable to possess local second-order information, yet the global consensus variable has no associated objective function and consequently admits no Hessian, leading to a structural mismatch; 
(ii) existing ALADIN implementations typically rely on a central coordinator or assume undirected communication, which is not always feasible in many real-world applications where directed graphs are prevalent; and (iii) ALADIN relies on real-valued exchange of Hessians and gradients, which limits applicability in environments with limited communication bandwidth. 

To address the first challenge, a coupling matrix scheme was proposed in \cite[Sec.~12]{aadhithya2023learning} that reformulates consensus optimization into a resource allocation form, but this requires manual intervention, leaving direct consensus solving within ALADIN underexplored. 
To address the second challenge, prior work \cite{Engelmann2020} employs decentralized ADMM to solve the coupled QP in ALADIN. 
In an attempt to tackle decentralized consensus optimization directly, the GIANT algorithm \cite{wang2018giant} and its variants employ global gradient aggregation with exact local Hessian information. Nevertheless, they have been primarily studied only for convex problems on undirected graphs.
The ADMM-type algorithm in \cite{rikos2023asynchronous} simultaneously addresses all aforementioned challenges 
through the integration of Consensus ADMM with a decentralized quantized communication scheme; however, it remains limited to convex problems and is not based on the ALADIN framework.


Based on the above literature review, three critical gaps remain for ALADIN‑type algorithms in the context of distributed consensus optimization:
(i) tailoring ALADIN specifically for consensus constraints (rather than resource allocation);
(ii) decentralization over directed graphs without a central coordinator; and
(iii) quantized communication to reduce bandwidth.
Although some prior works address subsets of these issues (e.g., quantization for ADMM, decentralized QP solvers for resource allocation), no existing ALADIN‑based method addresses all three in a unified manner, particularly for non‑convex problems. 
To the best of our knowledge, apart from our recent works \cite{Du2025ACC,Du2025CDCA}, this gap has not been systematically addressed.

\vspace{-3mm}
\subsection{Main Contributions.} 
Motivated by the aforementioned open questions, we propose Consensus ALADIN (C-ALADIN) and its extensions built upon the ALADIN framework. 
Our main contributions are as follows.
\noindent
\\ \textbullet \ \textbf{C-ALADIN framework.} 
To tailor ALADIN specifically for distributed consensus optimization, inspired by Consensus ADMM \cite[Ch.~7]{boyd2011distributed}, we introduce C-ALADIN (as shown in \eqref{alg:ALADIN}) as a more natural solution that replaces the generic coupled quadratic program (QP) used for information coordination in ALADIN with a consensus QP.
\noindent
\\ \textbullet \ \textbf{Second-order C-ALADIN.} 
To reduce communication and computational complexity, second-order C-ALADIN is introduced. 
The coordinator employs a Broyden-Fletcher-Goldfarb-Shanno (BFGS)-based Hessian approximation for each agent, avoiding direct transmission of second-order information (Algorithm~\ref{alg:BFGS ALADIN2}). 
A closed‑form solution for the large‑scale consensus QP further reduces computational complexity. 
The resulting algorithm provides local convergence guarantees for non-convex consensus problems (Theorem~\ref{them: local convergence}) and, under constant Hessian assumptions, ensures global convergence for convex problems (Theorem~\ref{The: 1} and \ref{theorem 2}). 
\noindent
\\ \textbullet \ 
\textbf{First-order C-ALADIN.} 
For scenarios where the Hessian approximation matrices are difficult to evaluate, we introduce the first-order C-ALADIN (as shown in \eqref{eq: RCA}) by replacing the Hessian in the BFGS-based C-ALADIN with a suitably scaled identity matrix. 
This corresponds to the constant Hessian case $B_i = \rho I$ in Second-order C-ALADIN, and therefore inherits the same global convergence guarantees for convex problems (Theorems~\ref{The: 1} and~\ref{theorem 2}).
\noindent
\\ \textbullet \ 
\textbf{Decentralized first-order C-ALADIN with quantized communication.} 
For scenarios where agents communicate over directed decentralized networks,
building upon the first-order C-ALADIN, we introduce a decentralized variant for directed communication graphs (as shown in \eqref{eq: QuDRC-ALADIN}) that incorporates a quantized finite-time consensus protocol (Algorithm~\ref{alg:QuAS}). 
This variant enables agents to exchange quantized messages, reducing communication overhead. 
We prove that under convex local cost functions, it converges globally to a quantization-dependent neighborhood of the optimal solution (see Theorem~\ref{the: convergence}). 
\noindent
\\ \textbullet \ 
\textbf{Decentralized second-order C-ALADIN with quantized communication.} 
For scenarios involving non-convex consensus problems over directed networks with quantized communication,
we extend second-order C-ALADIN to two algorithmic variants (Algorithm~\ref{alg:Bilevel Consensus ALADIN} and Algorithm~\ref{alg: Decentralized Reduced BFGS Consensus ALADIN}). 
The first adopts a bilevel structure, solving the consensus QP via the decentralized first-order C-ALADIN (as shown in \eqref{eq: QuDRC-ALADIN}) to achieve full decentralization. The second employs Algorithm~\ref{alg:QuAS} (a decentralized quantized average consensus algorithm with finite-time convergence) to estimate averages of primal variables and gradients, then recovers a BFGS Hessian approximation. 
Both variants are fully decentralized, support quantized communication, and achieve local convergence to a neighborhood of a local optimum for non-convex problems (Theorem~\ref{them: local convergence error1} for Algorithm~\ref{alg:Bilevel Consensus ALADIN} and Appendix~\ref{APP: local convergence error 2} for Algorithm~\ref{alg: Decentralized Reduced BFGS Consensus ALADIN}).

 \textbf{Paper Organization.}
The remainder of the paper is organized as follows. 
Section~\ref{sec: Notation} introduces the notation and essential background. Section~\ref{sec: problem} formulates the problem. Section~\ref{sec: C-ALADIN} presents the C-ALADIN framework. 
Section~\ref{sec: centralized C-ALADIN} describes the centralized C-ALADIN and its convergence analysis for convex and non-convex problems. 
Section~\ref{sec: decentralized algorithm} introduces the quantized decentralized C-ALADIN and its convergence theory. 
Section~\ref{sec: numerical} provides numerical examples and performance comparisons with existing methods. Finally, Section~\ref{sec: conclusion} concludes the paper.

\section{Notation and Preliminaries}\label{sec: Notation}

\textbf{Notation.} 
We use the symbols $\mathbb{R}$, $\mathbb{Q}$, $\mathbb{Z}$, and $\mathbb{N}$ to represent the sets of real, rational, integer, and natural numbers, respectively. 
Matrices are indicated by capital letters (e.g., $A$), and vectors are represented by lowercase letters (e.g., $a$).
The transpose of matrix $A \in \mathbb R^{n \times n}$ and vector $a \in \mathbb R^{n}$ are represented as $A^\top$ and $a^\top$, respectively. 
For a real number $a \in \mathbb R $, $\lfloor a \rfloor$ and $\lceil a \rceil$ denote the greatest integer less than or equal to $a$ and the least integer greater than or equal to $a$, respectively. 
For the real vector $a \in \mathbb R^n$, $\lfloor a \rfloor \in \mathbb R^n$ and $\lceil a \rceil\in \mathbb R^n$ denote the element-wise operation. 
Furthermore, $\mathbf{1}$ represents the vector of all ones and $I$ denotes the identity matrix with appropriate dimensions. 
In addition, $\|a\|$ denotes the Euclidean norm of the vector $a$. 
The value of a variable $a$ of agent $i$ at iteration $k$ is denoted by $a_i^{[k]}$.
Furthermore, $|\mathcal S|$ denotes the cardinality of a countable set $\mathcal S$.

\textbf{Graph Theory.} 
The communication network is captured by a directed graph (or digraph) $\mathcal G=(\mathcal V, \mathcal E)$. 
The set of agents  is denoted as $\mathcal V= \{1, \dots, N\}$ (where $\mathcal N\geq 2$). 
The set of edges is denoted as $\mathcal E \subseteq \mathcal V\times \mathcal V \cup\{(i,i) \ | \ i \in \mathcal V \}$ (each agent has a virtual self-edge).  
A directed edge from agent $i$ to agent $j$ is denoted by $e_{ji} \overset{\cdot}{=}(j,i)\in \mathcal E$. 
The subset of agents that can directly transmit information to agent $i$ is called the set of in-neighbors of $i$ and is denoted $\mathcal N_i^- = \{ j\in \mathcal V \ | \ (i,j)\in \mathcal E \}$. 
The subset of agents that can directly receive information from agent $i$ is called the set of out-neighbors of $i$ and is denoted $\mathcal N_i^+ = \{ l \in \mathcal V \ | \ (l,i) \in \mathcal E \}$. 
The cardinality of $\mathcal N_i^-$ represented as $\mathcal D_i^- = |\mathcal N_i^-|$, is called \emph{in-degree} of agent $i$. 
The cardinality of $\mathcal N_i^+$ represented as $\mathcal D_i^+ = |\mathcal N_i^+|$, is called \emph{out-degree} of agent $i$. 
The diameter $D$ of digraph $\mathcal G$ is the longest shortest path between $i , j \in \mathcal V$.  
A digraph $\mathcal G$ is strongly connected if there exists a directed path from every agent $i$ to agent $j$ that $i,j\in \mathcal V$.

\textbf{Quantization.} 
Quantization reduces bandwidth requirements and improves communication efficiency. Three primary types of quantizers have been extensively studied: asymmetric, uniform, and logarithmic (see \cite{2019:Wei_Johansson}). In this paper, we utilize asymmetric mid‑rise quantizers with an infinite range, although our findings also hold for other types of quantizers. An asymmetric mid‑rise quantizer is defined as
\begin{equation}\label{eq: quantizer} \small
\begin{aligned}
q_{\Delta}(b) = \Delta \left\lfloor \frac{b}{\Delta} \right\rfloor \in \mathbb R^n,
\end{aligned}
\end{equation}
where $b\in \mathbb R^n$ is the value to be quantized, and $\Delta \in \mathbb Q$ denotes the quantization level. 

\section{Problem Formulation}\label{sec: problem}


Consider a network of $N$ agents, where each agent $i$ possesses a local objective function $f_i: \mathbb{R}^n \rightarrow \mathbb{R}$ and let a centralized coordinator variable $z\in \mathbb{R}^n$ be available. The consensus optimization problem is formulated as follows:
\begin{equation}\label{eq: reformulate1}\small
    \begin{aligned}
    \textbf{P1:} \quad \min_{\substack{x_i \in \mathbb{R}^n\\ i = 1, \dots, N}} & \qquad \sum_{i=1}^{N} f_i(x_i) \quad 
    \text{s.t.}  \quad x_i = z. 
    \end{aligned}
\end{equation}
This formulation follows the standard consensus approach \cite[Chapter 7.1]{boyd2011distributed}, where the equality constraints enforce agreement among all local variables $x_i$ on a common global variable $z$. 

Let us now consider a communication network represented by a directed graph $\mathcal{G} = (\mathcal{V}, \mathcal{E})$. Each agent $i$ has exclusive knowledge of its local cost function $f_i: \mathbb{R}^n \rightarrow \mathbb{R}$, and communication links have limited bandwidth. To accommodate fully decentralized operation and quantized communication, we reformulate \eqref{eq: reformulate1} as follows:
\begin{equation}\label{eq: reformulate3}\small
    \begin{aligned}
        \textbf{P2:} \quad \min_{\substack{x_i \in \mathbb{R}^n \\ i = 1, \dots, N}} & \quad \sum_{i=1}^{N} f_i(x_i) \\
        \text{s.t.} \quad & \quad x_i = x_j = z, \quad \forall (i,j) \in \mathcal{E}, \\
        & \quad m_{ij} \in \mathcal{Q}_\Delta, \quad \forall (i,j) \in \mathcal{E},
    \end{aligned}
\end{equation}
where $m_{ij}$ denotes the quantized message transmitted from agent $i$ to agent $j$, and the set $\mathcal{Q}_\Delta$ is defined as $\mathcal{Q}_\Delta =\{ q_\Delta(v) \mid v \in \mathbb{R}^n \}$, with $q_\Delta(\cdot)$ being the asymmetric mid‑rise quantizer given in \eqref{eq: quantizer}. Hence, any transmission of a vector $v \in \mathbb{R}^n$ from agent $i$ to agent $j$ is subject to quantization, i.e., $m_{ij} = q_\Delta(v)$. In contrast to \eqref{eq: reformulate1}, problem \eqref{eq: reformulate3} describes a fully decentralized scenario without a central coordinator. Here, agents coordinate exclusively by exchanging quantized messages with their immediate neighbors.

\subsection*{Operational Assumption}
We first make the following assumption that are important for our subsequent development. 

\begin{assumption}[Network Connection]\label{ass:1} 
    The communication network is modeled as a \textit{strongly connected} digraph $\mathcal{G} = (\mathcal{V}, \mathcal{E})$. 
    Also, every agent $i$ knows the diameter of the network $D$, and a common quantization level $\Delta$. 
\end{assumption}

Assumption~\ref{ass:1} ensures that information can propagate between all agents in the network and guarantees the convergence of our proposed algorithms (since strongly connected digraph implies there is a path from every agent to every other agent in the network).  
Knowing the diameter of the network is useful for each agent to determine whether Algorithm \ref{alg:QuAS} has converged, allowing it to proceed to step~$3$ of \eqref{alg:ALADIN}. 
Finally, the quantization level is important for quantizing the messages as described in \eqref{alg:QuAS} (thus ensuring efficient communication during the execution of Algorithm \ref{alg:QuAS}). 
Note here that any additional assumptions required will be stated explicitly before presenting each result.

\section{From Typical ALADIN to  C-ALADIN}\label{sec: C-ALADIN} 

Distributed resource problems are generally formulated in the fashion of mathematical programming, where $N$
	separable objectives are linearly coupled by $m$ equality constraints. In distributed resource problems, 
	$f_i: \mathbb R^{n_i}\rightarrow \mathbb R$ denotes the closed proper local objective function of each agent $i$. 
	Formally, 
	distributed resource problems can be described as follows \cite{Houska2021}:
	\begin{equation}\label{eq: DOPT_G}\small
		\begin{aligned}
			\min_{\substack{x_i \in \mathbb R^{n_i}\\ i = 1, \dots, N}} 
            \quad \mathop{\sum}_{i=1}^{N}  f_i(x_i)\quad
			\mathrm{s.t.}\;\; \quad\mathop{\sum}_{i=1}^{N} A_ix_i=b. 
		\end{aligned}
	\end{equation}
	Here, the coupling matrices $A_i\in \mathbb R^{n_i\times m}$ and the coupling parameter $b\in \mathbb R^{m}$ are given. The dimensions $n_i$s of local variables $x_i$s are potentially different. 
The augmented Lagrangian of problem \eqref{eq: DOPT_G} is given by $\mathcal L_\rho (x, \Lambda) = \sum^N_{i=1} f_i(x_i) + \Lambda^\top \left({\sum}_{i=1}^{N} A_ix_i-b  \right)
   + \frac{\rho}{2} \left\| \sum_{i=1}^{N} A_ix_i-b \right\|^2,$
 where, $\rho>0$ is a penalty parameter, $\Lambda$ denotes the dual variables and $x=[x_1^\top, \cdots,x_N^\top]^\top$ collects the local primal variables. Focusing on the augmented Lagrangian, details of typical ALADIN  to solve the resource allocation problem in \eqref{eq: DOPT_G} shows as \eqref{eq: T-ALADIN},
	\begin{equation}\label{eq: T-ALADIN}\small
			\left\{
			\begin{aligned}
				&x_i^{[k+1]} = \mathop{\mathrm{\arg}\mathrm{\min}}_{x_i}
				f_i(x_i) + \left(\Lambda^{[k]}\right)^\top A_i x_i + \frac{\rho}{2} \left\| x_i - y_i^{[k]} \right\|^2;\\
                & g_i = \nabla f_i\left(x_i^{[k+1]}\right), B_i^{[k+1]} \approx \nabla^2 f_i\left(x_i^{[k+1]}\right)\succ 0;\\
				& \quad\left(\Delta y,\Lambda^{[k+1]}\right)\\
				=&\left\{\begin{aligned}
					\mathop{\mathrm{\arg}\mathrm{\min}}_{\Delta y_i\in \mathbb{R}^{n_i},\forall i}\; & \sum_{i=1}^N  \left( \frac{1}{2}\Delta y_i^\top B_i^{[k+1]} \Delta y_i + g_i^\top \Delta y_i\right) \\
					\operatorname{s.t.}\quad&  \sum_{i=1}^N A_i\left(x_i^{[k+1]}+\Delta y_i \right) = b\; | \;\Lambda
				\end{aligned}\right\};\\
				&y_i^{[k+1]}=x_i^{[k+1]}+\Delta y_i.
			\end{aligned}\right.
		\end{equation}
        At each iteration, the agents solve their respective augmented Lagrangian subproblems in parallel, yielding the local primal updates $x_i^{[k+1]}$. Subsequently, a coupled QP coordinates the agents' decisions, simultaneously producing the dual variables $\Lambda^{[k+1]}$ and the primal increments $\Delta y$.
        
To solve consensus optimization problems using typical ALADIN \eqref{eq: T-ALADIN}, Problem \eqref{eq: reformulate1} can be reformulated as,
\begin{equation}\small
		\label{eq: DOPT_G2}
		\begin{aligned}
			\min_{\substack{x_i \in \mathbb{R}^n \\ i = 1, \dots, N}}& \quad   f_1(x_1)+f_2(x_2)+\cdots+f_N(x_N)\\
			\mathrm{s.t.}\;\quad\;&  \quad\begin{matrix}
				\;\;\; x_1  &-x_2 & & & &=0 \\
				&\;\;\; x_2 &-x_3 & & &=0\\
				&   \vdots & \vdots& &\vdots & \;\;\;\;\vdots\\
				&    &  & x_{N-1}
				& -x_N&=0\\
				-x_1 &    &  & 
				& \;\;\;x_N&\;=0.\\
			\end{matrix}
		\end{aligned}
	\end{equation}
	The linear constraints in \eqref{eq: DOPT_G2} can be compactly expressed as $\sum_{i=1}^{N} A_i x_i = 0$, where $A_1 = \left(I,
	0,
	\cdots,
	0,
	-I \right)^\top$, $A_2=\left(-I,
	I,
	0,
	\cdots,
	0 \right)^\top$,$\cdots$, $A_N=\left(0,
	\cdots,0,
	-I,
	I \right)^\top$,
	and $I$ denotes the appropriately sized identity matrix. 
However, this formulation has notable limitations. The choice of ${A_i}$ is not unique, and the resulting coupled matrices are often difficult to implement in practice and may contain redundant information. Moreover, the associated communication and computational overhead further restrict the practical applicability of ALADIN, leading to inherent inefficiencies in implementing Problem \eqref{eq: DOPT_G2}.  Therefore, instead of pursuing this line, we directly analyze the consensus formulation \eqref{eq: reformulate1} in the sequel.

The augmented Lagrangian of problem \eqref{eq: reformulate1} is given by
\begin{equation}\label{eq: Lagrangian}\small
   \mathcal L_\rho (x, z, \lambda) = \sum^N_{i=1} f_i(x_i) + \lambda_i^\top \left(x_i -z  \right) + \frac{\rho}{2} \left\| x_i -z \right\|^2,
 \end{equation}
where 
$\lambda=[\lambda_1^\top, \cdots,\lambda_N^\top]^\top$ denotes the dual variables. 
Focusing on \eqref{eq: Lagrangian}, drawing inspiration from Consensus ADMM \cite{boyd2011distributed}, details of C-ALADIN to solve the consensus optimization problem in \eqref{eq: reformulate1} are as follows, 
\begin{equation}\label{alg:ALADIN}\small
	\left\{
	\begin{aligned}
		&	x_i^{[k+1]} = \mathop{\arg\min}_{x_i} f_i(x_i) +\left(\lambda_{i}^{[k]}\right)^\top  x_i + \frac{\rho}{2} \left\|x_i-z^{[k]}\right\|^2;\\
         & g_i = \nabla f_i\left(x_i^{[k+1]}\right), B_i^{[k+1]} \approx \nabla^2 f_i\left(x_i^{[k+1]}\right)\succ 0;\\
		& \quad\left(z^{[k+1]},\Delta x,\lambda^{[k+1]}\right)\\
		=&\left\{\begin{aligned}
			\mathop{\mathrm{\arg}\mathrm{\min}}_{\Delta x_i\in \mathbb{R}^{n},\forall i}\; & \sum_{i=1}^N  \left( \frac{1}{2}\Delta x_i^\top B_i^{[k+1]} \Delta x_i + g_i^\top \Delta x_i\right) \\
			\operatorname{s.t.}\quad&  x_i^{[k+1]}+\Delta x_i  = z\; | \;\lambda_i
		\end{aligned}\right\}.
	\end{aligned}\right.
\end{equation}
C-ALADIN enables each agent to independently solve its nonlinear programming (NLP) subproblem while coordinating with others through a consensus QP. In \eqref{alg:ALADIN}, the first phase consists of each agent updating its local minimizer to $x_i^{[k+1]}$ via an augmented Lagrangian subproblem. Given $x_i^{[k+1]}$, each agent then approximates the positive definite Hessian approximation matrices $B_i$ and evaluates the gradients $g_i$ of $f_i$. In the second phase, upon receiving $B_i$ and $g_i$ from all agents, the master solves a large‑scale convex consensus QP. The solution to this consensus QP yields the updated primal variables $z$, $x_i^{[k+1]}+\Delta x_i$ for all $i$, and the dual variables $\lambda_i$. Finally, the master communicates the updated primal and dual variables back to each agent, and the entire process is repeated until convergence. This use of a consensus QP allows ALADIN to directly address distributed consensus optimization without requiring the manual construction of coupling matrices as in \eqref{eq: DOPT_G2}. The following sections present concrete instantiations of C-ALADIN, where the centralized or decentralized nature of the algorithm stems from whether the consensus QP in \eqref{alg:ALADIN} is solved in a centralized or decentralized manner.

\section{Centralized C-ALADIN}\label{sec: centralized C-ALADIN}
In this section, we present two centralized implementations of C-ALADIN that solve \eqref{eq: reformulate1} by handling the consensus QP in \eqref{alg:ALADIN} in a centralized manner, differing only in the choice of $B_i$. Section~\ref{sec: BFGS Consensus ALADIN} proposes Algorithm~\ref{alg:BFGS ALADIN2}, which uses BFGS updates and a closed-form solution of the consensus QP to reduce computational and communication complexity. When second-order information is not applicable, Section~\ref{sec: Reduced Consensus ALADIN} provides a first-order alternative. Finally, Section~\ref{sec: Convengence of Consensus ALADIN} gives the convergence analysis of centralized C-ALADIN.
\subsection{Second-Order C-ALADIN}\label{sec: BFGS Consensus ALADIN}
	In \eqref{alg:ALADIN}, each agent is required to upload a substantial amount of information to the coordinator. Moreover, the coordinator must solve a large-scale consensus QP. These two factors significantly hinder the computational efficiency of \eqref{alg:ALADIN}, particularly when applied to high-dimensional problems. This section presents two techniques to improve the communication and computation efficiency of C-ALADIN.
    
\subsubsection{Improvement of Uploading Communication Efficiency}\label{sec: Derivative-Free Consensus ALADIN}
					In \eqref{alg:ALADIN}, in order to avoid uploading the gradient and Hessian approximation directly, we choose to decode the first- and second-order information on the master side, which is detailed as follows
					\begin{equation}\small \label{eq: BFGS}
						\left\{
						\begin{aligned}
							&g_i(x_i^{[k+1]})=\;\rho(z^{[k]}-x_i^{[k+1]})-\lambda_i^{[k]}\quad\text{((sub)gradient)},\\
							&s_i(x_i^{[k+1]},x_i^{[k]})=\;x_i^{[k+1]}-x_i^{[k]},\\
							&y_i(x_i^{[k+1]},x_i^{[k]})=\; g_i(x_i^{[k+1]})-g_i(x_i^{[k]}),\\
							&B_i^{[k+1]}=\;B_i^{[k]}-\frac{B_i^{[k]} s_i s_i^\top B_i^{[k]}}{s_i^\top B_i^{[k]} s_i}+\frac{y_iy_i^\top}{s_i^\top y_i}.
						\end{aligned}
						\right.
					\end{equation}
    First, assuming that the augmented NLP in \eqref{alg:ALADIN} can be solved exactly, the Clarke subdifferential of $f_i$ at $x_i^{[k+1]}$ yields $g_i = \rho(z^{[k]} - x_i^{[k+1]}) - \lambda_i^{[k]} \in \partial f_i$. Consequently, the (sub)gradient can be recovered from $x_i^{[k+1]}$ without explicit transmission. Second, by leveraging the differences of local private variables and corresponding (sub)gradients, the BFGS Hessian approximation can also be reconstructed at the coordinator side. A detailed analysis of the reduction in agent-to-coordinator data uploads achieved by the proposed approach can be found in \cite[Remark 1]{Du2025ACC}.
\subsubsection{Improvement of Download Communication and Computation Efficiency}\label{sec: download}
The consensus QP in  \eqref{alg:ALADIN} admits the following closed-form solution:
\begin{subequations}\label{eq: closed form}
\begin{align}
&z^{[k+1]}=\left(\sum_{i=1}^N B_i^{[k+1]} \right)^{-1} \sum_{i=1}^N \left(B_i^{[k+1]}x_i^{[k+1]} - g_i \right), \label{eq: closed form1}\\
&\lambda_{i}^{[k+1]}=B_i^{[k+1]}(x_i^{[k+1]}-z^{[k+1]})-g_i.\label{eq: closed form2}
\end{align}
\end{subequations}
A similar update formula for $z$ can also be found in \cite{mokhtari2018iqn,kovalev2019stochastic,dinc2024incremental}. The resulting reduction in computational complexity achieved by this closed-form solution is discussed in \cite[Remark 2]{Du2025ACC}.


\subsubsection{Algorithm Development}

By combining the techniques from the two preceding subsections, we present the second-order variant of C-ALADIN in Algorithm~\ref{alg:BFGS ALADIN2}.

\begin{algorithm}[ht]\small
\caption{Second-Order C-ALADIN}
\textbf{Initialization:} choose $\rho>0$, initial guess $(\lambda_i^{[1]},z^{[1]},B_i=\rho I)$.
\textbf{Iteration.}
\begin{enumerate}
    \item Each agent updates its local variable $x_i$ locally and transmits it to the master:
    \begin{equation}\label{eq: NLP}\small
        {x_i}^{[k+1]}=\mathop{\arg\min}_{x_i}   f_i(x_i) +\left(\lambda_{i}^{[k]}\right)^\top  x_i + \frac{\rho}{2} \left\|x_i-z^{[k]}\right\|^2.
    \end{equation}
    \item Operations on the master side:
    \begin{enumerate}
        \item Recover the gradient $g_i$ and the BFGS Hessian $B_i^{[k+1]}$ from each ${x_i}^{[k+1]}$ using \eqref{eq: BFGS}.
        \item Update the global variable $z^{[k+1]}$ and the dual variables $\lambda_i^{[k+1]}$ via \eqref{eq: closed form}.
    \end{enumerate}
\end{enumerate}
\textbf{Output.} Each agent $i$ obtains $x_i^*$ that solves problem \eqref{eq: reformulate1}.
\label{alg:BFGS ALADIN2}
\end{algorithm}

Following the same structure as \eqref{alg:ALADIN}, each agent first updates its local variable $x_i^{[k+1]}$ and sends it to the master. In the second step, the master reconstructs the gradients and Hessian approximations of all agents using the BFGS update (see \eqref{eq: BFGS}). In the third step, the master computes the global variable $z^{[k+1]}$ and the dual variables $\lambda_i^{[k+1]}$ via \eqref{eq: closed form} and broadcasts them to the agents. The entire procedure is repeated until convergence of $z$.
	

\subsection{First-Order C-ALADIN}\label{sec: Reduced Consensus ALADIN}

To further improve the computational efficiency of \eqref{eq: closed form}, we next introduce a first-order variant of C-ALADIN. This variant differs from the second-order C-ALADIN in that $B_i = \rho I$. Consequently, the updates of $x_i$, $g_i$, $z$, and $\lambda_i$ are given by the following equations:
\begin{subequations}\label{eq: RCA}\small
\begin{align}
       {x_i}^{[k+1]} &= \mathop{\arg\min}_{x_i}   f_i(x_i) +\left(\lambda_{i}^{[k]}\right)^\top  x_i + \frac{\rho}{2} \left\|x_i-z^{[k]}\right\|^2, \label{eq: local update} \\
        g_i &= \rho\left(z^{[k]}-x_i^{[k+1]}\right)-\lambda_i^{[k]}, \quad \forall i \in \mathcal V , \label{eq: gradient1} \\
        z^{[k+1]} &= \frac{1}{N}\sum_{i=1}^N\left(x_i^{[k+1]}-\frac{g_i}{\rho}\right) , \label{eq: z update} \\
        \lambda_i^{[k+1]} &= \rho\left(x_i^{[k+1]}-z^{[k+1]}\right)-g_i, \quad \forall i \in \mathcal V \label{eq: dual update}.  
\end{align}
\end{subequations}
Note that both Algorithm~\ref{alg:BFGS ALADIN2} and \eqref{eq: RCA} are two variants of C-ALADIN (see \eqref{alg:ALADIN}). The difference between \eqref{eq: RCA} and Consensus ADMM is discussed in \cite[Appendix]{Du2025ACC}.

\subsection{Convergence Analysis}\label{sec: Convengence of Consensus ALADIN}

In this section, we present the convergence analysis for centralized C-ALADIN. Specifically, Section~\ref{sec: local} provides the local convergence analysis for non-convex consensus optimization problems, while Section~\ref{sec: global} develops the global convergence theory for convex consensus optimization problems. These convergence results apply directly to the two variants of C-ALADIN, namely Algorithm~\ref{alg:BFGS ALADIN2} and \eqref{eq: RCA}, as well.

\subsubsection{Local Convergence Analysis}\label{sec: local}
The local convergence analysis of C-ALADIN for non-convex problems follows an approach similar to \cite[Section 7]{Houska2016}. First, we introduce the following assumption to ensure that stationary points correspond to strict local minimum for problem \eqref{eq: reformulate1}.

\begin{assumption}[Second Order Sufficient Condition]\label{ass:02} 
    The local cost function $f_i$ of each agent $i\in \mathcal V$ is closed, proper, twice continuously differentiable and let Second Order Sufficient Condition (SOSC) \cite[Theorem 5.4]{diehl2016lecture}, \cite{sun2006optimization} been satisfied, such that
    \begin{equation}\label{eq: SOSC}
    \nabla^2f_i(z^*)\succ 0,\; \forall i \in \mathcal V,
    \end{equation}
where $z^*$ denotes a strict local minimizer of problem \eqref{eq: reformulate1}.
\end{assumption}
This condition supports local convergence analysis of C-ALADIN for non-convex problems, as it does not require global convexity of the local cost functions $f_i$ in problem \eqref{eq: reformulate1}.

	\begin{theorem}[Local Convergence of Algorithm \ref{alg:BFGS ALADIN2}]\label{them: local convergence}
Consider problem \eqref{eq: reformulate1} with variables $x_i$ updated according to \eqref{eq: NLP}. Under Assumption \ref{ass:02}, satisfied at a local minimizer $z^*$ of \eqref{eq: reformulate1}, if the variables $x_i$ and $z$ are initialized in a sufficiently small neighborhood of $z^*$, then the sequence generated by C-ALADIN converges locally to $z^*$.
	\end{theorem}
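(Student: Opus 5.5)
We follow the local analysis of ALADIN in \cite[Section 7]{Houska2016}. The argument splits into a contraction estimate for the decoupled NLP step \eqref{eq: NLP} and an SQP-type estimate for the consensus QP step \eqref{eq: closed form}. Throughout, let $\lambda_i^* = -\nabla f_i(z^*)$ be the optimal multipliers, so that $\sum_i \lambda_i^*=0$ and $(x_i^*,z^*,\lambda_i^*)=(z^*,z^*,\lambda_i^*)$ is a KKT point of \eqref{eq: reformulate1}.

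\textbf{Step 1: the NLP step.} By Assumption~\ref{ass:02}, $\nabla^2 f_i(z^*)+\rho I\succ 0$. The implicit function theorem applied to the optimality condition
\begin{equation*}
\nabla f_i(x_i)+\lambda_i^{[k]}+\rho(x_i-z^{[k]})=0
\end{equation*}
then gives the following. For $(z^{[k]},\lambda_i^{[k]})$ near $(z^*,\lambda_i^*)$, the subproblem \eqref{eq: NLP} has a locally unique minimizer that is a smooth function of $(z^{[k]},\lambda_i^{[k]})$. Moreover, there is a constant $c_1$ with
\begin{equation*}
\|x_i^{[k+1]}-z^*\|\le c_1\bigl(\|z^{[k]}-z^*\|+\|\lambda_i^{[k]}-\lambda_i^*\|\bigr).
\end{equation*}
The recovered $g_i$ in \eqref{eq: BFGS} is then exactly $\nabla f_i(x_i^{[k+1]})$.

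\textbf{Step 2: the QP step.} The consensus QP is one Newton-type step on the KKT system of \eqref{eq: reformulate1}, linearized at $x^{[k+1]}$ with Hessian approximations $B_i^{[k+1]}$. By \eqref{eq: closed form}, Taylor expansion of $\nabla f_i$ around $x_i^{[k+1]}$, and invertibility of the KKT matrix (which uses $\sum_i B_i\succ0$), we aim to show
\begin{equation*}
\|z^{[k+1]}-z^*\|+\|\lambda^{[k+1]}-\lambda^*\|\le c_2\Bigl(\max_i\|B_i^{[k+1]}-\nabla^2 f_i(x_i^{[k+1]})\|\,\|x^{[k+1]}-x^*\| + \|x^{[k+1]}-x^*\|^2\Bigr).
\end{equation*}
Combining this with Step 1 gives a one-step estimate for $e^{[k]}=\|z^{[k]}-z^*\|+\|\lambda^{[k]}-\lambda^*\|$ of the form
\begin{equation*}
e^{[k+1]}\le c\,(\epsilon_B+e^{[k]})\,e^{[k]},
\end{equation*}
where $\epsilon_B$ bounds the Hessian approximation error.

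\textbf{Step 3: control of the BFGS matrices (main obstacle).} We need $B_i^{[k]}$ to stay uniformly positive definite and bounded, with $\epsilon_B$ small, along the iteration. Near $z^*$, each $f_i$ is strongly convex by Assumption~\ref{ass:02} and continuity of $\nabla^2 f_i$. Hence the curvature condition $s_i^\top y_i\ge \mu\|s_i\|^2>0$ holds, since $y_i$ is a true gradient difference. This keeps the BFGS update \eqref{eq: BFGS} well defined and positive definite.

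Standard bounded-deterioration results for BFGS then bound how far $B_i^{[k]}$ drifts from $\nabla^2 f_i(z^*)$ while the iterates stay close. We would try to invoke these in the form used for local quasi-Newton analysis. If the initial approximation $B_i=\rho I$ is not close to $\nabla^2 f_i(z^*)$, we only need $c\,(\epsilon_B+e)<1$. To get this, we choose the neighborhood small and argue that the BFGS error stays bounded, giving linear rather than superlinear convergence. This is enough for the claimed local convergence.

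\textbf{Step 4: induction.} An induction shows that the iterates never leave the neighborhood and that $e^{[k]}\to0$. Step 1 then transfers this to $x_i^{[k]}\to z^*$.
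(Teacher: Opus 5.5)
Your Steps 1, 2 and 4 follow the same route as the paper's proof. The paper defers to \cite[Theorem 3]{Du2025ACC}, and its template is written out in Appendix~\ref{APP: local convergence error 1}. That route has three parts: a Lipschitz bound for the decoupled NLP from $\nabla^2 f_i+\rho I\succeq\sigma I$, an SQP-type bound for the consensus QP whose leading factor is the Hessian-approximation error, and a contraction/induction.

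The gap is in Step 3. Your one-step estimate $e^{[k+1]}\le c\,(\epsilon_B+e^{[k]})\,e^{[k]}$ contracts only if $c\,\epsilon_B<1$. Shrinking the neighborhood makes $e^{[k]}$ small but does nothing to $\epsilon_B$. Bounded deterioration at best keeps $\|B_i^{[k]}-\nabla^2 f_i(z^*)\|$ near its initial value $\|\rho I-\nabla^2 f_i(z^*)\|$, which need not be small compared with $1/c$. So ``the BFGS error stays bounded, hence linear convergence'' does not follow.

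Boundedness of the error is genuinely insufficient for a contraction. Take $n=1$, $N=2$, each $f_i$ quadratic with curvature $h>0$, and a frozen $B_i=b$. The updates \eqref{eq: NLP} and \eqref{eq: closed form} then propagate the dual error mode $\ell=\lambda_1-\lambda_1^*=-(\lambda_2-\lambda_2^*)$ exactly as $\ell^{+}=-\frac{b-h}{h+\rho}\,\ell$. This diverges for every initial $\ell\neq 0$, however small, whenever $b>2h+\rho$.

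The paper does not derive the accuracy of $B_i$; it assumes it. In Appendix~\ref{APP: local convergence error 1}, a parameter $\gamma$ with $\|B_i^{[k+1]}-\nabla^2 f_i(x_i^{[k+1]})\|<\gamma$ is taken small enough that the contraction factor $(\rho+1)\gamma/\sigma$ is below one. To close your argument, you must add a hypothesis of this kind explicitly. One option is to assume $\epsilon_B$ is uniformly small along the iterates, with $c\,\epsilon_B<1$.

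Another option is to assume $\|B_i^{[1]}-\nabla^2 f_i(z^*)\|\le\delta$ with $\delta$ small. Then your curvature-condition observation plus Broyden--Dennis--Mor\'e bounded deterioration keeps $\epsilon_B=\mathcal O(\delta)$, and Steps 2 and 4 go through with a linear rate. That bounded-deterioration result additionally needs a Lipschitz Hessian near $z^*$, which Assumption~\ref{ass:02} does not give. With the algorithm's actual initialization $B_i=\rho I$ and no such closeness, you would need a different, structure-exploiting argument, and the proposal does not supply one.
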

	\textbf{Proof:} See  \cite[Theorem 3]{Du2025ACC}. \hfill$\blacksquare$ 

In the above section, we show the  local convergence of C-ALADIN. In the next section we will show global convergence analysis  of C-ALADIN.

\subsubsection{Global Convergence Analysis}\label{sec: global}

	We assume that the  $f_i$s of \eqref{eq: reformulate1} are closed, proper, and strictly convex. 
	In the global convergence theory of C-ALADIN, the sub-problems are updated as ${x_i}^{[k+1]}=\mathop{\arg\min}_{x_i} f_i(x_i)+ \left(\lambda_i^{[k]} \right)^\top  x_i+\frac{1}{2}\left\|x_i-z^{[k]}\right\|_{B_i}^2$, instead of \eqref{eq: NLP}.

		For ease of analysis, we temporarily assume that the matrices $B_i \succ 0$ are constant. In this setting, Algorithm \ref{alg:BFGS ALADIN2} can be expressed as follows:
\begin{subequations}\label{eq: BFGS ALADIN}\small
	 \begin{align}
			&{x_i}^{[k+1]}=\mathop{\arg\min}_{x_i}f_i(x_i)+\left(\lambda_i^{[k]}\right)^\top  x_i+\frac{1}{2}\left\|x_i-z^{[k]}\right\|_{B_i}^2,\label{eq: x update2}\\
			&g_i=B_i\left(z^{[k]}-x_i^{[k+1]}\right)-\lambda_i^{[k]},\ \forall i \in \mathcal V  \label{eq: gradient}\\
			&z^{[k+1]}=\left(\sum_{i=1}^N  B_i \right)^{-1}  \sum_{i=1}^N \left(B_ix_i^{[k+1]} -    g_i   \right), \label{eq: global z}\\
			&\lambda_{i}^{[k+1]}=B_i\left(x_i^{[k+1]}-z^{[k+1]}\right)-g_i, \ \forall i \in \mathcal V.\label{eq: dual}
		\end{align}
	\end{subequations}
Note that the following relation holds and will be used in the convergence proof:
\begin{equation}\label{eq: sum_lam}\small
    \sum_{i=1}^{N} \lambda_i^{[k]} = 0, \;\forall k.
\end{equation}

	For establishing the global convergence theory of  C-ALADIN,		
	we introduce the following \emph{energy function} \cite{ling2015dlm} (also called Lyapunov function in \cite[Appendix A]{boyd2011distributed} and \cite{yang2022proximal}),
	\begin{equation}\label{eq: LYA}\small
		\mathscr{L}(z,\lambda)=	 \sum_{i=1}^{N}\left(   \left\|z-z^*\right\|_{B_i}^2+\left\|\lambda_i-\lambda_i^*\right \|_{B_i^{-1}}^2 \right),
	\end{equation}
    where  $(z^*, \lambda^*)$ denotes the optimal solution pair of problem \eqref{eq: reformulate3}.
	Note that the choice of energy function is not unique \cite{nonlinear}. Next, we will establish the global convergence of C-ALADIN by demonstrating the monotonic decrease of the energy function \eqref{eq: LYA}.

Before providing the global convergence theory of C-ALADIN, we first proposed the following lemma. 

\begin{lemma}
For the distributed consensus optimization problem presented in \eqref{eq: reformulate1}, \eqref{alg:ALADIN} establishes a relationship between the local primal update $x_i^{[k+1]}$, the local dual variables $\lambda_i$ and $\lambda_i^{[k+1]}$, and the global primal variable approximations $ z_i$ and $\ z_i^{[k+1]}$. 
This relationship is 
    \begin{equation}\label{eq: lemma}\small
        x_i^{[k+1]} = \frac{B_i^{-1}\left( \lambda_i^{[k+1]}-\lambda_i^{[k]}\right)}{2}+\frac{ z^{[k+1]}+ z^{[k]}}{2}.
    \end{equation}
\end{lemma}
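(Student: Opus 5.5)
The plan is a direct algebraic elimination of the (sub)gradient $g_i$. I will work in the constant-Hessian setting \eqref{eq: BFGS ALADIN}, where the same matrix $B_i\succ 0$ appears both in the proximal term of the local subproblem \eqref{eq: x update2} and in the coordinator's consensus QP. This is the setting in which the lemma is used for the global convergence analysis. No optimality argument is needed beyond what is already encoded in the updates. The gradient recovery \eqref{eq: gradient} is exactly the first-order optimality condition of \eqref{eq: x update2}, and \eqref{eq: dual} is the stationarity condition of the consensus QP in \eqref{alg:ALADIN} with respect to $\Delta x_i$, written in its closed form \eqref{eq: closed form2}.

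First, I take the recovered gradient from \eqref{eq: gradient},
$$g_i = B_i\left(z^{[k]}-x_i^{[k+1]}\right)-\lambda_i^{[k]}.$$
Next, I substitute it into the dual update \eqref{eq: dual}:
$$\lambda_i^{[k+1]} = B_i\left(x_i^{[k+1]}-z^{[k+1]}\right) - B_i\left(z^{[k]}-x_i^{[k+1]}\right)+\lambda_i^{[k]}.$$
Collecting terms gives
$$\lambda_i^{[k+1]}-\lambda_i^{[k]} = B_i\left(2x_i^{[k+1]} - z^{[k+1]}-z^{[k]}\right).$$
Since $B_i\succ 0$ is invertible, I multiply on the left by $B_i^{-1}$ and divide by $2$, which yields \eqref{eq: lemma} immediately.

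The only step requiring care is consistency of the matrices. The matrix in the proximal term of \eqref{eq: x update2}, and hence in the gradient recovery, must coincide with the one used in \eqref{eq: dual}. Otherwise the two $B_i$ factors do not combine into $B_i(2x_i^{[k+1]}-z^{[k+1]}-z^{[k]})$. I will therefore state explicitly that the identity is proved under the constant-$B_i$ formulation \eqref{eq: BFGS ALADIN}. This formulation includes the first-order variant \eqref{eq: RCA}, where $B_i=\rho I$ and the identity reads $x_i^{[k+1]}=\frac{\lambda_i^{[k+1]}-\lambda_i^{[k]}}{2\rho}+\frac{z^{[k+1]}+z^{[k]}}{2}$. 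For nonsmooth $f_i$, $g_i$ is a Clarke subgradient obtained from the exact solution of the subproblem. This does not affect the algebra.
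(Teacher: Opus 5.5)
Your proof is correct and follows the paper's own argument: substitute the gradient recovery \eqref{eq: gradient} into the dual update \eqref{eq: dual}, collect terms to get $\lambda_i^{[k+1]}-\lambda_i^{[k]}=B_i\bigl(2x_i^{[k+1]}-z^{[k+1]}-z^{[k]}\bigr)$, and invert $B_i$. Your remark that the identity requires the same constant $B_i$ in both the gradient-recovery step and the dual update, i.e.\ the setting \eqref{eq: BFGS ALADIN} including $B_i=\rho I$, is a worthwhile clarification that the paper leaves implicit.
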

\textbf{Proof:} From \eqref{eq: dual} we have 
\begin{equation}\label{eq: lemma2}\small
    \begin{aligned}
    \lambda_i^{[k+1]}&=B_i\left(x_i^{[k+1]}-z^{[k+1]}\right)-g_i\\
        &\overset{\eqref{eq: gradient}}{=} B_i\left(x_i^{[k+1]}- z^{[k+1]}\right) -  \left(B_i\left( z^{[k]}-x_i^{[k+1]}\right)-\lambda_i^{[k]} \right)\\
        &= 2B_i x_i^{[k+1]} - B_i(z^{[k+1]}+ z^{[k]}) +\lambda_i^{[k]}.
    \end{aligned}
\end{equation}
equation \eqref{eq: lemma} is then derived from \eqref{eq: lemma2}. \hfill$\blacksquare$

Note that when $B_i = \rho I$, \eqref{eq: lemma} becomes
\begin{equation}\label{eq: lemma3}\small
   x_i^{[k+1]} = \frac{\lambda_i^{[k+1]} - \lambda_i^{[k]}}{2\rho} + \frac{z^{[k+1]} + z^{[k]}}{2},
\end{equation}
which will be employed in the convergence analysis of \eqref{eq: RCA}.

Before presenting the global convergence analysis, we introduce the following assumption. The strict convexity of $f_i$, together with the existence of an optimal solution, guarantees the uniqueness of the global optimal solution for problem \eqref{eq: reformulate1}.
\begin{assumption}[Strictly Convex]\label{ass:01} 
    The local cost function $f_i$ of each agent $i\in \mathcal{V}$ is closed, proper, and strictly convex, satisfying
    \begin{equation}\label{eq: convex2}\small
        f_i(t x_\alpha + (1-t) x_\beta) < t f_i(x_\alpha) + (1-t) f_i(x_\beta),
    \end{equation}
    for every $x_\alpha, x_\beta \in \mathbb{R}^n$ and $t \in (0,1)$.
\end{assumption}


\begin{theorem}[Global Convergence of \eqref{eq: BFGS ALADIN}]\label{The: 1}
Let Assumption~\ref{ass:01} hold for Problem~\eqref{eq: reformulate1}.  
Assume that Problem~\eqref{eq: reformulate1} is feasible and satisfies strong duality, and that, for each \(i\), the matrices \(B_i \succ 0\) are constant and symmetric. Let $x_i^* = z^*$ denotes the (unique) primal solution and $\lambda^*$ a dual solution of problem~\eqref{eq: reformulate1}, then the iterates of \eqref{eq: BFGS ALADIN} satisfy
		\begin{equation}\small
			\mathscr{L}\left(z^{[k+1]},\lambda^{[k+1]}\right) \leq \mathscr{L}\left(z^{[k]},\lambda^{[k]}\right) - 4\Pi\left( \sum^N_{i=1} \|x_i^{[k+1]} - z^*\| \right),
		\end{equation}
		for a class $\mathcal{K}$ function $\Pi$ \cite{nonlinear}.		
	\end{theorem}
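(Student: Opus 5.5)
The plan is to test the optimality conditions of the local subproblems \eqref{eq: x update2} against those of the optimal pair $(z^*,\lambda^*)$, use the Lemma identity \eqref{eq: lemma} to eliminate $x_i^{[k+1]}$, and read off a telescoping difference of the energy \eqref{eq: LYA}. The decrease term comes from strict convexity via strict monotonicity of $\partial f_i$.

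\emph{Step 1: optimality conditions.} First-order optimality of \eqref{eq: x update2} gives $0\in\partial f_i(x_i^{[k+1]})+\lambda_i^{[k]}+B_i(x_i^{[k+1]}-z^{[k]})$. Hence the vector $g_i$ in \eqref{eq: gradient} satisfies $g_i\in\partial f_i(x_i^{[k+1]})$. At the solution, strong duality and the KKT conditions give $-\lambda_i^*\in\partial f_i(z^*)$ and $\sum_i\lambda_i^*=0$. Together with \eqref{eq: sum_lam} for the iterates, these are the two ingredients taken from the problem structure. By Assumption~\ref{ass:01}, $\partial f_i$ is strictly monotone. I will therefore define class-$\mathcal K$ functions $\Pi_i$ with
\[
(g_i+\lambda_i^*)^\top(x_i^{[k+1]}-z^*)\ \ge\ \Pi_i\big(\|x_i^{[k+1]}-z^*\|\big).
\]

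\emph{Step 2: algebraic identity.} Substitute \eqref{eq: lemma} into \eqref{eq: gradient}. This yields
\[
g_i=-\tfrac12(\lambda_i^{[k+1]}+\lambda_i^{[k]})+\tfrac12 B_i(z^{[k]}-z^{[k+1]}),
\]
\[
x_i^{[k+1]}-z^*=\tfrac12 B_i^{-1}(\lambda_i^{[k+1]}-\lambda_i^{[k]})+\tfrac12(z^{[k+1]}+z^{[k]})-z^*.
\]
Expand $\sum_i (g_i+\lambda_i^*)^\top(x_i^{[k+1]}-z^*)$ and group the terms.

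The $\lambda$–$\lambda$ terms give $-\tfrac14\sum_i\big(\|\lambda_i^{[k+1]}-\lambda_i^*\|^2_{B_i^{-1}}-\|\lambda_i^{[k]}-\lambda_i^*\|^2_{B_i^{-1}}\big)$. This uses the symmetry of $B_i$ and the difference-of-squares identity.

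The $z$–$z$ terms give $\tfrac14\sum_i\big(\|z^{[k]}-z^*\|^2_{B_i}-\|z^{[k+1]}-z^*\|^2_{B_i}\big)$.

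The mixed terms vanish after summation over $i$. Each mixed term is a sum over $i$ of a $\lambda$-combination (one of $\lambda_i^{[k+1]}-\lambda_i^*$, $\lambda_i^{[k]}-\lambda_i^*$, or $\lambda_i^{[k+1]}-\lambda_i^{[k]}$) paired with a $z$-vector independent of $i$. These $\lambda$-combinations sum to zero by \eqref{eq: sum_lam} and $\sum_i\lambda_i^*=0$.

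Hence $\sum_i(g_i+\lambda_i^*)^\top(x_i^{[k+1]}-z^*)=\tfrac14\big(\mathscr L(z^{[k]},\lambda^{[k]})-\mathscr L(z^{[k+1]},\lambda^{[k+1]})\big)$. Combining this with Step 1 gives $\mathscr L^{[k+1]}\le\mathscr L^{[k]}-4\sum_i\Pi_i(\|x_i^{[k+1]}-z^*\|)$.

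\emph{Step 3: a single class-$\mathcal K$ function.} Set $\Pi(s):=\min_i\Pi_i(s/N)$. Since some $\|x_i^{[k+1]}-z^*\|\ge\frac1N\sum_j\|x_j^{[k+1]}-z^*\|$, we get $\sum_i\Pi_i(\|x_i^{[k+1]}-z^*\|)\ge\Pi\big(\sum_i\|x_i^{[k+1]}-z^*\|\big)$, which is the claimed bound.

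\emph{Main obstacle.} The delicate step is the construction of $\Pi_i$ in Step 1. Strict convexity only gives positivity of $(g-g^*)^\top(x-z^*)$ for $x\neq z^*$, not a uniform modulus; the infimum over $\{\|x-z^*\|\ge r\}$ can degenerate as $\|x\|\to\infty$. My first attempt is to argue inductively that $\mathscr L$ is nonincreasing. Then $z^{[k]}$ and $\lambda^{[k]}$ stay in a compact sublevel set of $\mathscr L$, and by \eqref{eq: lemma} so do the $x_i^{[k+1]}$. On this compact set I would define
\[
\Pi_i(r)=\inf\big\{(g-g^*)^\top(x-z^*):\ g\in\partial f_i(x),\ g^*=-\lambda_i^*,\ r\le\|x-z^*\|\le R\big\}.
\]
I would then use outer semicontinuity and local boundedness of $\partial f_i$ to get positivity and monotonicity in $r$, minorizing by a continuous strictly increasing function if necessary.
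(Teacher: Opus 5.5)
Your proposal is correct and follows the paper's argument. That argument is deferred to \cite[Theorem~1]{Du2025ACC} and mirrored in the decomposition (a)--(e) of Appendix~\ref{APP: global convergence error1}: lower-bound $\sum_i (g_i+\lambda_i^*)^\top(x_i^{[k+1]}-z^*)$ by strict monotonicity of $\partial f_i$, substitute \eqref{eq: lemma} into \eqref{eq: gradient}, telescope the $B_i^{-1}$-weighted $\lambda$-terms and the $B_i$-weighted $z$-terms, and cancel the mixed terms using \eqref{eq: sum_lam} and $\sum_i\lambda_i^*=0$.

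One simplification: the compactness detour in your last paragraph is not needed. For convex $f_i$ and any unit direction $d$, the quantity $t\,(g_t+\lambda_i^*)^\top d$ with $g_t\in\partial f_i(z^*+td)$ is nondecreasing in $t>0$. Hence the infimum over $\{\|x-z^*\|\ge r\}$ equals the positive, attained infimum over the compact sphere $\{\|x-z^*\|=r\}$, which gives a $\Pi_i$ independent of the initialization.
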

	\textbf{Proof:} See  \cite[Theorem 1]{Du2025ACC}.
    \hfill$\blacksquare$

It should be noted that the aforementioned theorem proves the monotonic decrease of \eqref{eq: LYA} throughout the iterations of C-ALADIN. However, due to the lack of an explicit analytical form for $\Pi(\cdot)$, the analysis guarantees sublinear convergence of C-ALADIN under Assumption~\ref{ass:01}. In the case of smooth and strongly convex functions $f_i$, a stronger result can be established, namely the global linear convergence of the algorithm. Prior to presenting this result, we introduce the following assumption.

\ADD{
\begin{assumption}[Strong Convexity and Smoothness]\label{ass:2} 
The local cost function $f_i$ of each agent $i \in \mathcal{V}$ is closed, proper, $L$-smooth and $\mu$-strongly convex. 
Specifically, for each local cost function $f_i$, for every $x_\alpha,x_\beta \in \mathbb R^n$, there exists a \emph{strong-convexity} constant $\mu_i>0$ and a \emph{Lipschitz-continuity} constant $L_i>0$ such that 
\begin{equation}\label{eq: mu}\small
\begin{aligned}
    f_i(x_\alpha)+&\nabla f_i(x_\alpha)^\top (x_\beta-x_\alpha) +\frac{\mu_i}{2}\|x_\beta-x_\alpha\|^2\leq f_i(x_\beta),
\end{aligned}
\end{equation}
and
\begin{equation}\label{eq: lip}\small
\begin{aligned}  
     \left\|\nabla f_i(x_\alpha)-\nabla f_i(x_\beta) \right\| \leq L_i \left\|x_\alpha-x_\beta\right\|,
\end{aligned}
\end{equation}
hold.
\end{assumption}
Assumption~\ref{ass:2} ensures strong convexity and $L$-smoothness. This allows us to establish a global linear convergence rate for \eqref{alg:ALADIN} while guaranteeing that the global cost function in \eqref{eq: reformulate3} has a unique minimum, a standard requirement in first-order distributed optimization frameworks \cite{beck2017first}.}
\begin{theorem}[Global Linear Convergence of \eqref{eq: BFGS ALADIN}]\label{theorem 2} 
Consider Problem~\eqref{eq: reformulate1} under Assumption~\ref{ass:2} (Strong Convexity and Smoothness).  
Assume that Problem~\eqref{eq: reformulate1} is feasible and satisfies strong duality, and that, for each \(i\), the matrices \(B_i \succ 0\) are constant.  Let $(z^*,\lambda^*)$ denote the unique optimal primal-dual solution.
Then, for the iterates of \eqref{eq: BFGS ALADIN}, there exists a constant $\delta > 0$ such that the following key inequality holds:
\begin{equation}\label{eq: dela}\small
    \delta \mathscr{L}\left(z^{[k+1]},\lambda^{[k+1]}\right) \leq 4 \sum_{i=1}^{N} \mu_i\left\| x_i^{[k+1]} - z^*\right \|^2.
\end{equation}
This implies that the Lyapunov function decreases linearly at each iteration:
\begin{equation}\label{eq: descent}\small
    \mathscr{L}\left(z^{[k+1]},\lambda^{[k+1]}\right) \leq \frac{1}{1+\delta} \mathscr{L}\left(z^{[k]},\lambda^{[k]}\right),
\end{equation}
and consequently, the algorithm converges globally with a linear rate:
\begin{equation}\label{eq: linear convergence proof1}\small
    \mathscr{L}\left(z^{[k]},\lambda^{[k]}\right) \leq \left( \frac{1}{1+\delta} \right)^k \mathscr{L}\left(z^{[1]},\lambda^{[1]}\right).
\end{equation}
Here, $z^{[1]}$ and $\lambda^{[1]}$ represent the initial primal and dual variables, respectively.
\end{theorem}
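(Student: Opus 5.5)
The plan has two ingredients. The first is a sharpened version of the descent estimate of Theorem~\ref{The: 1}, in which the class $\mathcal K$ term is replaced by an explicit quadratic term coming from strong convexity. The second is a reverse bound showing that $\mathscr{L}(z^{[k+1]},\lambda^{[k+1]})$ is controlled by the same quantity $\sum_i\|x_i^{[k+1]}-z^*\|^2$, using $L$-smoothness.

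\textbf{Step 1 (descent with an explicit rate).} I start from optimality of the local step \eqref{eq: x update2}. It gives $\nabla f_i(x_i^{[k+1]}) = B_i(z^{[k]}-x_i^{[k+1]})-\lambda_i^{[k]} = g_i$ from \eqref{eq: gradient}, so $g_i$ is exactly the gradient at $x_i^{[k+1]}$. The KKT conditions of \eqref{eq: reformulate1} give $\nabla f_i(z^*)+\lambda_i^*=0$ and $\sum_i\lambda_i^*=0$. Combining these with the dual update \eqref{eq: dual}, the relation \eqref{eq: lemma}, and $\sum_i\lambda_i^{[k]}=0$ from \eqref{eq: sum_lam}, I expand $\mathscr{L}(z^{[k]},\lambda^{[k]})-\mathscr{L}(z^{[k+1]},\lambda^{[k+1]})$ by polarization, i.e.\ $\|a\|^2-\|b\|^2=(a-b)^\top(a+b)$ in the $B_i$- and $B_i^{-1}$-weighted norms. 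This is the same computation behind Theorem~\ref{The: 1}, and I expect it to yield
\[
\mathscr{L}(z^{[k]},\lambda^{[k]})-\mathscr{L}(z^{[k+1]},\lambda^{[k+1]}) \ \ge\ 4\sum_{i=1}^N \big(\nabla f_i(x_i^{[k+1]})-\nabla f_i(z^*)\big)^\top\big(x_i^{[k+1]}-z^*\big).
\]
The cross terms cancel because of \eqref{eq: lemma} and the zero-sum property. Adding the two inequalities \eqref{eq: mu} with $(x_\alpha,x_\beta)$ swapped gives strong monotonicity of $\nabla f_i$, so the right-hand side is at least $4\sum_i\mu_i\|x_i^{[k+1]}-z^*\|^2$. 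Reproducing this identity carefully, with the factor $4$ and the weighted norms tracked correctly, is the main obstacle. If it does not close exactly, I will redo the expansion from the proof of \cite[Theorem~1]{Du2025ACC}, reading off the monotonicity term explicitly instead of passing to $\Pi$.

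\textbf{Step 2 (bounding $\mathscr{L}$ at $k+1$ by the primal error).} First I check that $z^*$ is a fixed point of \eqref{eq: global z}: since $\sum_i\nabla f_i(z^*)=0$, we have $z^*=(\sum_iB_i)^{-1}\sum_i(B_iz^*-\nabla f_i(z^*))$. Subtracting this from \eqref{eq: global z} gives
\[
z^{[k+1]}-z^* = \Big(\sum_{i} B_i\Big)^{-1}\sum_{i}\Big(B_i(x_i^{[k+1]}-z^*) - \big(\nabla f_i(x_i^{[k+1]})-\nabla f_i(z^*)\big)\Big).
\]
By \eqref{eq: lip}, this yields $\|z^{[k+1]}-z^*\|\le c_1\sum_i(\|B_i\|+L_i)\|x_i^{[k+1]}-z^*\|$. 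Similarly, \eqref{eq: dual} with $\lambda_i^*=B_i(z^*-z^*)-\nabla f_i(z^*)$ gives
\[
\lambda_i^{[k+1]}-\lambda_i^* = B_i(x_i^{[k+1]}-z^*)-B_i(z^{[k+1]}-z^*)-\big(\nabla f_i(x_i^{[k+1]})-\nabla f_i(z^*)\big),
\]
which is bounded by the same kind of expression. Squaring, using $(\sum a_j)^2\le N\sum a_j^2$, and inserting both bounds into \eqref{eq: LYA} gives a constant $C>0$, depending only on $N$, the $B_i$, and the $L_i$, with $\mathscr{L}(z^{[k+1]},\lambda^{[k+1]})\le C\sum_i\|x_i^{[k+1]}-z^*\|^2$. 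Setting $\delta = 4\min_i\mu_i/C$ then gives \eqref{eq: dela}.

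\textbf{Step 3 (conclusion).} From Step 1 and \eqref{eq: dela},
\[
\mathscr{L}(z^{[k]},\lambda^{[k]}) \ \ge\ \mathscr{L}(z^{[k+1]},\lambda^{[k+1]}) + 4\sum_i\mu_i\|x_i^{[k+1]}-z^*\|^2 \ \ge\ (1+\delta)\,\mathscr{L}(z^{[k+1]},\lambda^{[k+1]}).
\]
This is \eqref{eq: descent}. Iterating it from $k=1$ gives \eqref{eq: linear convergence proof1}.
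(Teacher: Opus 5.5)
Your proposal is correct and is essentially the paper's proof: you combine strong monotonicity (from adding the two swapped inequalities in \eqref{eq: mu}) with the energy identity behind Theorem~\ref{The: 1}, which in fact holds with equality, $\mathscr{L}(z^{[k]},\lambda^{[k]})-\mathscr{L}(z^{[k+1]},\lambda^{[k+1]}) = 4\sum_{i}\big(\nabla f_i(x_i^{[k+1]})-\nabla f_i(z^*)\big)^\top(x_i^{[k+1]}-z^*)$ provided $\sum_i\lambda_i^{[k]}=0$ also holds for the initial guess, and then with a Lipschitz-based bound $\mathscr{L}(z^{[k+1]},\lambda^{[k+1]})\le C\sum_i\|x_i^{[k+1]}-z^*\|^2$ that produces $\delta$. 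One small point you share with the paper: iterating \eqref{eq: descent} from $k=1$ gives the exponent $k-1$ rather than $k$ in \eqref{eq: linear convergence proof1}, because the stated bound at $k=1$ would force $\mathscr{L}(z^{[1]},\lambda^{[1]})=0$.
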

    
	\textbf{Proof:} See Appendix \ref{APP: theorem 2}.  \hfill$\blacksquare$


	A tutorial example that satisfies the conditions of Theorem~\ref{theorem 2} can be found in \cite[Section~5.1]{Houska2021}. Moreover, by setting $B_i = \rho I$ and applying \eqref{eq: lemma3}, the global convergence theory of \eqref{eq: RCA} can be established as well. A similar convergence proof for the standard ALADIN (see \eqref{eq: T-ALADIN}) is available in \cite[Chapter~5]{ALADINtheisi}.


Note that the centralized C-ALADIN, including Algorithm~\ref{alg:BFGS ALADIN2} and the variant in \eqref{eq: RCA}, is implemented by agents in a distributed manner but still requires a central coordinator that can exchange messages with every agent in the network. More specifically, the updates in \eqref{eq: closed form1} and \eqref{eq: z update} are performed centrally by the coordinator and rely on the exchange of real-valued vectors. Motivated by this limitation, the next section presents quantized decentralized C-ALADIN. The proposed algorithm enables agents to (i) collaboratively solve problem \eqref{eq: reformulate3} by communicating exclusively with their immediate neighbors, thereby eliminating the need for a central coordinator, and (ii) achieve quantized communication within the network.

\section{Quantized Decentralized C-ALADIN}\label{sec: decentralized algorithm}
This section focuses on the decentralized solution of problem~\eqref{eq: reformulate3} within the C-ALADIN framework, while ensuring high communication efficiency through quantized information exchange. Specifically, in Section~\ref{sec: Decentralized RC-ALADIN}, we introduce a first-order decentralized and quantized variant of C-ALADIN based on~\eqref{eq: RCA}, designed to solve convex instances of problem~\eqref{eq: reformulate3}. Subsequently, in Section~\ref{sec: Decentralized BFGS C-BFGS ALADIN with Efficient Communication}, we propose two second-order decentralized variants derived from Algorithm~\ref{alg:BFGS ALADIN2}, which are capable of addressing non-convex optimization problems. Finally, Section~\ref{sec: convergence2} presents the convergence analysis of all proposed algorithms, establishing global convergence for convex problems and local convergence for non-convex cases.

\subsection{Quantized Decentralized First-Order C-ALADIN}\label{sec: Decentralized RC-ALADIN}

In this section, we present the quantized decentralized first-order C-ALADIN, 
detailed below as \eqref{eq: QuDRC-ALADIN}.

\begin{subequations}\label{eq: QuDRC-ALADIN}\small
	 \begin{align}
			 x_i^{[k+1]} &= \mathop{\arg\min}_{x_i} f_i(x_i)+ \left(\hat\lambda_i^{[k]}\right)^\top x_i + \frac{\rho}{2}\left\|x_i-\hat z_i^{[k]}\right\|^2,\label{eq: new local primal}\\
			g_i &= \rho\left(\hat  z_i^{[k]}-x_i^{[k+1]}\right)-\hat\lambda_i^{[k]},  \label{eq: new gradient}\\
			\hat z_i^{[k+1]}&= \text{Algorithm~\ref{alg:QuAS}} \left(x_i^{[k+1]}-\frac{g_i}{\rho}, D, \Delta \right), \label{eq: local copy update}\\
			 \hat\lambda_i^{[k+1]}&=\rho\left(x_i^{[k+1]}-\hat z_i^{[k+1]}\right)-g_i.\label{eq: new dual}
		\end{align}
	\end{subequations}

The intuition of \eqref{alg:ALADIN} is organized into two main phases: local optimization and coordination among agents. 
In the first step, each agent $i \in \mathcal{V}$ performs a local optimization to determine the optimal value of its variable $x_i$ by solving its corresponding augmented objective function, (see \eqref{eq: new local primal}). 
Following this, in the second step each agent $i$ evaluates the (sub)gradient of its local function $f_i$ at the locally optimized solution $x_i^{[k+1]}$. 
This (sub)gradient evaluation serves as preparation for the aggregation process in the subsequent step (see \eqref{eq: new gradient}). 
In the third step, all agents collaborate to update their estimates of the global variable $\hat z_i^{[k+1]}$ through the quantized, decentralized operation of Algorithm~\ref{alg:QuAS} (see \eqref{eq: local copy update}). 
Finally, in the fourth step each agent updates its dual variable $\hat\lambda_i^{[k+1]}$ which encodes sensitivity information related to the constraints of problem~\eqref{eq: reformulate3}. 
The updated dual variables are then utilized in the next iteration's local optimization phase (see \eqref{eq: new dual}). 
Overall, \eqref{alg:ALADIN} alternates between performing local optimization (step~$1$) and ~\eqref{eq: closed form}, iterating until convergence is achieved and the optimal solution is obtained.
 
\begin{algorithm}[h]
	\small
	\caption{FQAC: Finite-time Quantized Average Consensus}
	\textbf{Input.} 
    $y_i, D, \Delta$.\\
    \textbf{Initialization.} Each agent $i \in \mathcal{V}:$
    \begin{enumerate}
	\item Assigns probability 
    \begin{equation} p_{li}=\left\{
   \begin{aligned}
       &\frac{1}{1+\mathcal D_i^+},\quad& \text{if}\; l\in \mathcal N_i^+\cup \{i\},\\
       &0,\quad & \text{if}\; l\notin \mathcal N_i^+\cup \{i\},
   \end{aligned}
      \right.   
    \end{equation} 
    to each out-neighbor of agent $i$.
    \item Sets $\xi_i = 2$, $\chi_i= 2 q_{\Delta}(y_i)$ (see \eqref{eq: quantizer}).
	\end{enumerate}
	\textbf{Iteration.} For time steps $t=1,2,\cdots$ each agent $i \in \mathcal V$ does: 
	\begin{enumerate}
	\item \textbf{If} $t\;\text{mod}(D) = 1$, sets $M_i = \left\lceil \frac{\chi_i}{\xi_i} \right\rceil$ and $m_i=\left\lfloor \frac{\chi_i}{\xi_i} \right\rfloor$.\vspace{2mm}

\item 
Broadcasts $M_i, m_i$ to each out-neighbor $l \in \mathcal N_i^+$ and receives $M_j, m_j$ from each in-neighbor $j \in \mathcal N_i^-$. 
Then, sets $M_i = \text{max}_{j \in \mathcal N_i^- \cup \{i\}}\; M_j$, $m_i = \text{min}_{j \in \mathcal N_i^-\cup \{i\}}\; m_j$. \vspace{2mm}
\item Sets $\tau_i = \xi_i$. \vspace{2mm}
\item \textbf{While} $\tau_i>1$ \textbf{do}
\begin{enumerate}
    \item $c_i=\left\lfloor \frac{\chi_i}{\xi_i} \right\rfloor$. \vspace{2mm}
    \item Sets $\chi_i=\chi_i - c_i$, $\xi_i=\xi_i-1$, $\tau_i=\tau-1$. 
    \item Transmits $c_i$ to randomly chosen out-neighbor $l \in \mathcal N_i^+\cup \{i\}$ with probability $p_{li}$. 
    \item Receives $c_i$ from $j\in \mathcal N_i^-$ and updates
    \begin{subequations}\label{eq: local information avg}\small
    \begin{align}
    \chi_i^{[t+1]} &= \chi_i^{[t]} + \sum_{j\in \mathcal N_i^- }w_{ij}^{[t]} c_j^{[t]}, \\
    \xi_i^{[t+1]} &= \xi_i^{[t]} + \sum_{j\in \mathcal N_i^-} w_{ij}^{[t]}.
        \end{align}
    \end{subequations}    
    Here $w_{ij}^{[t]} = 1$ if agent $i$ receives $c_j^{[t]}$ from agent $j$ at step $t$. 
    Otherwise $w_{ij}^{[t]}=0$ and agent $i$ does not receive information from agent $j$. 
\end{enumerate}
\item \textbf{if} $t\; \text{mod}\; (D)=0$ and $\left\|M_i-m_i\right\|_{\infty}\leq 1$, set $\hat z_i^{[k+1]} = m_i \Delta$, and stop the operation of the algorithm. 
	\end{enumerate}
    \textbf{Output.} $\hat z_i^{[k+1]}$.
	\label{alg:QuAS}
\end{algorithm}

Algorithm~\ref{alg:QuAS} follows a structure similar to \cite[Algorithm~$1$]{rikos2022non}, and consists of three main operations: quantization, averaging, and a stopping criterion.  
During initialization each agent $i$ quantizes its local information $y_i = x_i^{[k+1]} - \frac{g_i}{\rho}$ into a quantized value $\chi_i$. 
Then, it splits $\chi_i$ into $\xi_i$ pieces (the value of some pieces might be greater than others by one). 
It retains the piece with the smallest value to itself and transmits the rest $\xi_i - 1$ pieces to randomly chosen out-neighbors $l \in \mathcal N_i^+$ or to itself. 
Then, it receives the pieces $c_j$ transmitted from each in-neighbor $j \in \mathcal N_i^-$ and updates $\chi_i$ and $\xi_i$ as in \eqref{eq: local information avg}. 
The algorithm also performs max- and min-consensus operations every $D$ time steps. 
If $\|M_i-m_i\|\leq 1$, then each agent $i$ scales its solution according to the quantization level to compute $\hat z_i^{[k+1]}$.
At this point, Algorithm~\ref{alg:QuAS} terminates, and each agent $i$ transitions to step~$4)$ of \eqref{alg:ALADIN}. 
Note that Algorithm~\ref{alg:QuAS} is guaranteed to converge in finite time as established in \cite[Theorem~$1$]{rikos2022non}. A detailed comparison between the algorithm in \eqref{eq: QuDRC-ALADIN} and related approaches is provided in \cite{Du2025ACC}.

\subsection{Quantized Decentralized Second-Order C-ALADIN}\label{sec: Decentralized BFGS C-BFGS ALADIN with Efficient Communication}

We propose two decentralized second-order variants of C-ALADIN based on Algorithm~\ref{alg:BFGS ALADIN2}, which are designed to handle non-convex problems.

\subsubsection{Bi-level Variant of Second-Order Quantized Decentralized C-ALADIN}\label{sec: Bi-level Decentralized Consensus ALADIN}

In this section, we present a bi‑level variant of decentralized second‑order C‑ALADIN for solving \eqref{eq: reformulate3}. The proposed algorithm is shown in Algorithm~\ref{alg:Bilevel Consensus ALADIN}.

\begin{algorithm}[ht]
		\caption{Quantized decentralized second-order C-ALADIN (Version I)}
        \textbf{Input.} Strongly connected digraph $\mathcal{G} = (\mathcal{V}, \mathcal{E})$, parameter $\rho$, network diameter $D$, quantization level $\Delta$, for each agent $i \in \mathcal{V}$. 
    Each agent $i \in \mathcal V$ has a local cost function $f_i$. 
    Assumptions~\ref{ass:1} and \ref{ass:02} hold. 
    \\
    \textbf{Initialization.} Randomly chosen dual variable $\hat \lambda_i \in \mathbb{R}^n$, and global variable estimation $\hat z_i \in \mathbb{R}^n$, for each agent $i \in \mathcal{V}$. 
    \\
    \textbf{Iteration.} 
	Each agent $i \in \mathcal{V}$ repeats:
        \textbf{Outer level iteration.}
		\begin{enumerate}
			\item Optimize $x_i$ as equation \eqref{eq: new local primal}.
			\item Evaluate and upload the Hessian approximation $B_{i}\succ 0$ and the gradient $g_{i}$ at ${x_i}^{[k+1]}$ as equation  \eqref{eq: BFGS}. The above information are collected for the \emph{inner level} procedure.
            \end{enumerate}
			\textbf{Inner level iteration.}\\
Solve the coupled QP in \eqref{alg:ALADIN} by  \eqref{eq: QuDRC-ALADIN} with the uploaded $x_i^{[k+1]},\;B_i^{[k+1]},\;g_i$ from each agent if
\begin{equation}\label{eq: inner level stop}
    \left\|\begin{split}
        \hat z_i^{[r+\tau]} -\hat z_i^{[r]}
    \end{split}\right\| \leq \epsilon,
\end{equation}
where $r$ and $(r+\tau)$ represent the random selection iterations of the internal hierarchical loop. Output $(\hat z_i^{[r+\tau]}, \hat \lambda_i^{[r+\tau]})$ to the \emph{outer level} procedure.

		\textbf{Output.} Each agent $i$ calculates $x_i^*$ that solves problem \eqref{eq: reformulate3}. 
		\label{alg:Bilevel Consensus ALADIN}
	\end{algorithm}


At the outer level, in the first step, each agent $i \in \mathcal{V}$ updates its local variable $x_i^{[k+1]}$, as in \eqref{eq: new local primal}. In the subsequent step, each agent evaluates its local Hessian and gradient at $x_i^{[k+1]}$ as in \eqref{eq: BFGS}. This prepares the strictly convex consensus QP at the outer level in \eqref{alg:ALADIN}. Then, the inner level solves this consensus QP using any decentralized consensus optimization algorithm; here we take \eqref{eq: QuDRC-ALADIN} as an example.
    
\subsubsection{Decentralized Approximate Second-Order C-ALADIN}\label{sec: Decentralized Approximate BFGS Consensus ALADIN}

Instead of the bi-level structure described above, an alternative approach to preserving second-order information in decentralized C-ALADIN is to employ an inexact aggregation of the Hessian matrices, i.e., $\sum_{i=1}^N B_i^{[k+1]}$ in \eqref{eq: closed form}. The proposed decentralized approximate second-order variant of C-ALADIN is presented in Algorithm~\ref{alg: Decentralized Reduced BFGS Consensus ALADIN}. The difference between Algorithm~\ref{alg: Decentralized Reduced BFGS Consensus ALADIN} and Algorithm~\ref{alg:BFGS ALADIN2} is that the former does not directly use \eqref{eq: closed form} to update $z$ and $\lambda_i$, because it operates in a decentralized, coordinator-free manner. Instead, it first uses \eqref{eq: x AVG} and \eqref{eq: g avg} to compute the averages $\bar x_i^{[k+1]}$ and $l_i^{[k+1]}$ of the primal variables $x_i^{[k+1]}$ and the gradients $g_i$, respectively. It then estimates the Hessian at $\bar x_i^{[k+1]}$ using \eqref{eq: hessian estimation}. Finally, it updates $(\hat z_i^{[k+1]}, \hat \lambda_i^{[k+1]})$ via \eqref{eq: primal_dual1}. This introduces an approximation error compared to Algorithm~\ref{alg:BFGS ALADIN2}. The corresponding convergence analysis is presented in the next section.
\begin{algorithm}[ht]\small
    \caption{Quantized decentralized second-order C-ALADIN (Version II)}
	\textbf{Input.} Strongly connected digraph $\mathcal{G} = (\mathcal{V}, \mathcal{E})$, parameter $\rho$, network diameter $D$, quantization level $\Delta$, for each agent $i \in \mathcal{V}$. 
    Each agent $i \in \mathcal V$ has a local cost function $f_i$. 
    Assumptions~\ref{ass:1} and \ref{ass:02} hold. 
    \\
    \textbf{Initialization.} Randomly chosen dual variable $\hat \lambda_i \in \mathbb{R}^n$, and global variable estimation $\hat z_i \in \mathbb{R}^n$, for each agent $i \in \mathcal{V}$. 
    \\
    \textbf{Iteration.} 
	Each agent $i \in \mathcal{V}$ repeats:
	\begin{enumerate}
		\item Optimize $x_i$ as equation \eqref{eq: new local primal}.
		\item Evaluate Hessian and gradient from ${x_i}^{[k+1]}$:
		\begin{itemize}
			\item Update the gradient $g_i$ as equation \eqref{eq: new gradient}.
			\item Update the BFGS Hessian $B_i^{[k+1]}$ as
            \eqref{eq: BFGS}.

		\end{itemize}

		\item Update the Inexact Hessian aggregation part of \eqref{eq: closed form}:
		\begin{itemize}
			\item Calculate the global variable  averaging estimation $\bar x_i^{[k+1]}$ as 
            \begin{equation}\label{eq: x AVG}
        \bar x_i^{[k+1]}= \text{Algorithm~\ref{alg:QuAS}} \left(x_i^{[k+1]}, D, \Delta \right).
    \end{equation}
           \item Calculate the averaging estimation of gradient averaging at  $l_i^{[k+1]}$ as 
           \begin{equation}\label{eq: g avg}
              l_i^{[k+1]} = \text{Algorithm~\ref{alg:QuAS}} \left(\nabla f_i(\bar x_i^{[k+1]}), D, \Delta \right).
           \end{equation}
				\item Update the non-inversion BFGS Hessian update as
				\begin{equation}\label{eq: hessian estimation}
                \left\{
					\begin{aligned}
						&\bar s(\bar x_i^{[k+1]},\bar x_i^{[k]})=\;\bar x_i^{[k+1]}-\bar x_i^{[k]},\\
						&\bar y(\bar x_i^{[k+1]},\bar x_i^{[k]})=\; l_i^{[k+1]}-l_i^{[k]},\\
						 &H_i^{[k+1]}=\; \left( I - \frac{\bar s \bar y^\top}{\bar y^\top \bar  s}\right)H_i^{[k]}\left( I - \frac{ \bar y \bar  s^\top}{\bar y^\top \bar s}\right)+ \frac{\bar s \bar s^\top}{\bar y^\top \bar s}.
					\end{aligned}
					\right.
				\end{equation}
		\end{itemize} 
	\item Update $( z_i^{[k+1]},\hat \lambda_{i}^{[k+1]})$ as 
	\begin{equation}\label{eq: primal_dual1}
    \left\{
		\begin{aligned}
			\hat z_i^{[k+1]} =&\; \bar x_i^{[k+1]} - H_i^{[k+1]}l_i^{[k+1]},	\\
			\hat \lambda_{i}^{[k+1]}=& \;B_i^{[k+1]}(x_i^{[k+1]}-\hat z^{[k+1]}) -g_i.
		\end{aligned}\right.
	\end{equation}	
	\end{enumerate}
    \textbf{Output.} Each agent $i$ calculates $x_i^*$ that solves problem \eqref{eq: reformulate3}. 
	\label{alg: Decentralized Reduced BFGS Consensus ALADIN}
\end{algorithm}




\subsection{Convergence Analysis}\label{sec: convergence2}
In this section, we introduce the global convergence theory for  \eqref{eq: QuDRC-ALADIN} and the local convergence theory for Algorithm \ref{alg:Bilevel Consensus ALADIN} and  Algorithm \ref{alg: Decentralized Reduced BFGS Consensus ALADIN}.

\subsubsection{Global Convergence Analysis}\label{sec: Global Convergence Analysis2}

In this section, we provide the convergence analysis of \eqref{alg:ALADIN}. 
First, we introduce the two lemmas that are important for our analysis. 
Then, we prove our main result via a theorem.

\begin{lemma}[Key Properties of the Estimation Errors]
The update of the global variable estimation $\hat z_i$ of each agent $i \in \mathcal{V}$ is given by \eqref{alg:ALADIN} in \eqref{eq: local copy update}. 
According to the constraints of problem~\eqref{eq: reformulate3}, the following equation is satisfied 
    \begin{equation}\small\label{eq: error}\left\{
    \begin{aligned} 
        &\hat z_i^{[k+1]} = \frac{1}{N} \sum_{j=1}^N \Delta \left \lfloor \frac{y_j}{\Delta} \right \rfloor + \kappa_i, \;\left\| \kappa_i \right\|\leq \sqrt{n}\Delta, \\
        &\left\|z^{[k+1]}-\hat z_i^{[k+1]}\right\|\leq 2\sqrt{n}\Delta,\\[2mm]
                &\left\| \hat \lambda_i^{[k+1]} - \lambda_i^{[k+1]}
 \right\|= \left\|\rho(z^{[k+1]}-\hat z_i^{[k+1]})\right\|\leq 2\rho \sqrt{n}\Delta,
    \end{aligned}\right. 
\end{equation}
where $y_i = x_i^{[k+1]}-\frac{g_i}{\rho}$.
 
\end{lemma}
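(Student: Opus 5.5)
We compare the decentralized iteration \eqref{eq: QuDRC-ALADIN} with its centralized counterpart \eqref{eq: RCA}, evaluated at the same local data $x_i^{[k+1]}, g_i$. Thus $y_i = x_i^{[k+1]} - g_i/\rho$, the exact average is $z^{[k+1]} = \frac{1}{N}\sum_j y_j$ as in \eqref{eq: z update}, and the exact dual is $\lambda_i^{[k+1]} = \rho(x_i^{[k+1]}-z^{[k+1]})-g_i$ as in \eqref{eq: dual update}. The argument has three steps: characterize the output of Algorithm~\ref{alg:QuAS}, bound the quantization error of the average, and propagate the resulting error through the affine dual update.

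\textbf{Step 1 (output of FQAC).} We invoke the finite-time convergence result of \cite[Theorem~1]{rikos2022non} for Algorithm~\ref{alg:QuAS}. During initialization, each agent sets $\chi_i = 2q_\Delta(y_i)$ and $\xi_i=2$. The transmissions in \eqref{eq: local information avg} only move integer pieces $c_i$ together with unit weights, so $\sum_i \chi_i$ and $\sum_i \xi_i = 2N$ are invariant. The ratio of these totals is therefore the quantized average $\bar q = \frac1N\sum_j q_\Delta(y_j)$, expressed in units of $\Delta$.

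At termination, the max/min-consensus check $\|M_i-m_i\|_\infty\le 1$ guarantees that every ratio $\chi_i/\xi_i$ lies, componentwise, between $\lfloor \bar q/\Delta\rfloor$ and $\lceil \bar q/\Delta\rceil$. It follows that the output $m_i\Delta$ differs from $\bar q$ by at most $\Delta$ in each coordinate. Defining $\kappa_i := \hat z_i^{[k+1]} - \bar q$, we obtain $\|\kappa_i\|_\infty\le\Delta$, and hence $\|\kappa_i\|\le\sqrt n\,\Delta$. This is the first line of \eqref{eq: error}.

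\textbf{Step 2 (distance to the exact average).} We decompose
\[
z^{[k+1]}-\hat z_i^{[k+1]} = \frac1N\sum_j\Bigl(y_j-\Delta\bigl\lfloor y_j/\Delta\bigr\rfloor\Bigr) - \kappa_i .
\]
By the definition \eqref{eq: quantizer} of the mid-rise quantizer, each coordinate of $y_j-\Delta\lfloor y_j/\Delta\rfloor$ lies in $[0,\Delta)$. The average of these terms therefore has norm at most $\sqrt n\,\Delta$. The triangle inequality combined with Step 1 then gives $\|z^{[k+1]}-\hat z_i^{[k+1]}\|\le 2\sqrt n\,\Delta$, which is the second line of \eqref{eq: error}.

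\textbf{Step 3 (dual error).} Subtracting \eqref{eq: dual update} from \eqref{eq: new dual}, both evaluated with the same $x_i^{[k+1]}$ and $g_i$, the terms $\rho x_i^{[k+1]}$ and $g_i$ cancel. This yields $\hat\lambda_i^{[k+1]}-\lambda_i^{[k+1]} = \rho(z^{[k+1]}-\hat z_i^{[k+1]})$, and Step 2 gives the bound $2\rho\sqrt n\,\Delta$.

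The main obstacle is Step 1. We must extract from \cite{rikos2022non} the precise statement that the terminal state lies within one quantization unit of the true quantized average, componentwise for vectors. The initialization $\xi_i=2$, $\chi_i = 2q_\Delta(y_i)$ must be handled carefully so that the invariant ratio equals exactly the quantized average. If the cited theorem only provides the scalar case, we would apply it coordinatewise; this is what produces the $\sqrt n$ factor.
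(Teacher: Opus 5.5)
Your proposal is correct and is essentially the argument the paper delegates to \cite{jiang2021asynchronous} and \cite[Lemma~1]{rikos2023distributed2}. Mass conservation in Algorithm~\ref{alg:QuAS} together with the stopping test $\|M_i-m_i\|_\infty\le 1$ places the output within $\Delta$ per coordinate of the quantized average; the mid-rise quantizer error and the triangle inequality then give $2\sqrt{n}\Delta$, and the affine dual update transfers this bound with factor $\rho$. Your implicit readings are the right ones: $\chi_i$ must be kept in units of $\Delta$ (i.e., $2\lfloor y_i/\Delta\rfloor$) for the output $m_i\Delta$ to make sense, and $z^{[k+1]},\lambda_i^{[k+1]}$ must be the exact quantities \eqref{eq: z update}, \eqref{eq: dual update} computed from the same $x_i^{[k+1]},g_i$.
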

\textit{Proof.} See  \cite[Appendix]{jiang2021asynchronous}, \cite[Lemma~$1$]{rikos2023distributed2}.
 \hfill$\blacksquare$

\begin{lemma}
For the distributed consensus optimization problem presented in \eqref{eq: reformulate3}, \eqref{alg:ALADIN} establishes a relationship between the local primal update $x_i^{[k+1]}$, the local dual variables $\hat \lambda_i$ and $\hat \lambda_i^{[k+1]}$, and the global primal variable approximations $\hat z_i$ and $\hat z_i^{[k+1]}$. 
This relationship is 
    \begin{equation}\label{eq: lemma error}\small
    \begin{aligned}
        x_i^{[k+1]} = \frac{\hat \lambda_i^{[k+1]}-\hat\lambda_i^{[k]}}{2\rho}+\frac{\hat z^{[k+1]}_i+\hat z_i^{[k]}}{2}.
    \end{aligned}
    \end{equation}
\end{lemma}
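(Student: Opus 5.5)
The identity \eqref{eq: lemma error} can be obtained by the same algebra as in the proof of the lemma leading to \eqref{eq: lemma3}. The only new ingredient is that the centralized quantities $z^{[k]}, \lambda_i^{[k]}$ are replaced by the local estimates $\hat z_i^{[k]}, \hat\lambda_i^{[k]}$ produced by \eqref{eq: QuDRC-ALADIN}. The key observation is that \eqref{eq: new gradient} and \eqref{eq: new dual} are exactly the local updates \eqref{eq: gradient} and \eqref{eq: dual} with $B_i=\rho I$ and hatted variables. The quantized averaging of Algorithm~\ref{alg:QuAS} enters only through the value $\hat z_i^{[k+1]}$, which is simply treated as given.

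Concretely, start from \eqref{eq: new dual}, $\hat\lambda_i^{[k+1]}=\rho(x_i^{[k+1]}-\hat z_i^{[k+1]})-g_i$, and substitute $g_i$ from \eqref{eq: new gradient}, $g_i=\rho(\hat z_i^{[k]}-x_i^{[k+1]})-\hat\lambda_i^{[k]}$. Collecting terms gives
\begin{equation*}
\hat\lambda_i^{[k+1]} = 2\rho x_i^{[k+1]} - \rho\left(\hat z_i^{[k+1]}+\hat z_i^{[k]}\right) + \hat\lambda_i^{[k]},
\end{equation*}
which is the analogue of \eqref{eq: lemma2}. Dividing by $2\rho>0$ and solving for $x_i^{[k+1]}$ then yields \eqref{eq: lemma error}.

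There is essentially no obstacle here. The one point needing care is index bookkeeping: the $\hat z_i^{[k]}$ and $\hat\lambda_i^{[k]}$ appearing in \eqref{eq: new gradient} must be the same ones used in the local subproblem \eqref{eq: new local primal} at iteration $k$. Unlike the centralized case, the estimate $\hat z_i$ is agent-specific, so the identity is purely local and holds for each $i$ separately. No consensus property (such as \eqref{eq: sum_lam}) and none of the error bounds of \eqref{eq: error} are needed. Those bounds would enter only later, when relating \eqref{eq: lemma error} to the exact relation \eqref{eq: lemma3} in the convergence theorem.
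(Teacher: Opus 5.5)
Your proposal is correct and follows the paper's route exactly: substitute \eqref{eq: new gradient} into \eqref{eq: new dual} to obtain the hatted analogue of \eqref{eq: lemma2}, then solve for $x_i^{[k+1]}$, treating the output $\hat z_i^{[k+1]}$ of Algorithm~\ref{alg:QuAS} as a given value. Your algebra checks out, and you are right that the identity holds for each agent separately and needs none of the bounds in \eqref{eq: error}.
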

\textit{Proof.} The result is similar to equation \eqref{eq: lemma3} and the proof is similar to equation \eqref{eq: lemma2}.
\hfill$\blacksquare$

Moreover, from the KKT (Karush-Kuhn-Tucker) system of problem \eqref{eq: closed form} and \eqref{eq: error}, the following formulas can be obtained,
\begin{equation}\label{eq: lam}\small
\begin{aligned}
    \sum_{i=1}^N \lambda_i^{[k]} = 0, \ \text{and} \ \left\|\sum_{i=1}^N \hat\lambda_i^{[k]}\right\| \leq 2\rho N \sqrt{n} \Delta.
\end{aligned}
\end{equation}
In \eqref{eq: lam} the first equality arises from the KKT stationarity condition for the reduced QP problem in \eqref{eq: closed form}. 
The second inequality arises from substituting the third inequality of \eqref{eq: error} into \eqref{eq: new dual} and then summing over all agents. 
For establishing the global convergence of \eqref{eq: QuDRC-ALADIN}, we introduce the following Lyapunov function, 
\begin{equation}\label{eq: lya}\small
\begin{aligned}
    \hat{\mathcal L}(\hat z,\hat \lambda) =  \rho N\left\|\hat z - z^* \right\|^2+\frac{1}{\rho} \sum_{i=1}^N \left\|\hat\lambda_i - \lambda_i^* \right\|^2 .
\end{aligned}
\end{equation}


\begin{theorem}[Global Linear Convergence of \eqref{eq: QuDRC-ALADIN}]\label{the: convergence}
Let us consider a digraph $\mathcal G= \left( \mathcal V, \mathcal E\right)$. 
Each agent $i \in \mathcal V$ has a local cost function $f_i$, and Assumptions~\ref{ass:1} and \ref{ass:2} hold.  Assume that Problem~\eqref{eq: reformulate1} is feasible and satisfies strong duality.
Each agent $i \in \mathcal V$ in the network executes \eqref{alg:ALADIN} for solving the consensus optimization problem in~\eqref{eq: reformulate3} in a decentralized fashion. 
Given parameter $\rho>0$, 
during the operation of \eqref{alg:ALADIN} there always exists a $\delta>0$ such that 
\begin{equation}\label{eq: condition}\small
\begin{aligned}
     \delta \hat{\mathcal L}\left(\hat z^{[k+1]}, \hat\lambda^{[k+1]}\right)\leq 4 \sum_{i=1}^N \mu_i\left\|x_i^{[k+1]} -z^* \right\|^2 , 
\end{aligned}
\end{equation} 
where $\hat z = \hat z_i, \forall i \in \mathcal{V}$. 
From \eqref{eq: condition}, we have that during the operation of \eqref{alg:ALADIN} the following inequality is satisfied 
\begin{equation}\label{eq: reault1}\small
\begin{aligned}
    \hat{\mathcal L} \left(\hat z^{[k+1]}, \hat\lambda^{[k+1]}\right) \leq \frac{1}{1+ \delta}\hat{\mathcal L} \left(\hat z^{[k]}, \hat\lambda^{[k]}\right) + \frac{4}{1+ \delta} \mathcal O(N\sqrt{n}\Delta),
\end{aligned}
\end{equation} 
where 
 $\Delta$ is the utilized quantization level.
\end{theorem}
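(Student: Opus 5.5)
The plan is to replay the Lyapunov argument behind Theorem~\ref{theorem 2} with $B_i=\rho I$, treating quantization as a bounded perturbation controlled by the estimation-error lemma \eqref{eq: error}.

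\textbf{Step 1 (one-step inequality from strong convexity).} The optimality condition of \eqref{eq: new local primal} gives $\nabla f_i(x_i^{[k+1]}) = \rho(\hat z_i^{[k]}-x_i^{[k+1]})-\hat\lambda_i^{[k]} = g_i$. At the optimum, $\nabla f_i(z^*)=-\lambda_i^*$. Applying \eqref{eq: mu} twice between $x_i^{[k+1]}$ and $z^*$ and adding yields
\[
\left(g_i+\lambda_i^*\right)^\top\left(x_i^{[k+1]}-z^*\right)\ \ge\ \mu_i\left\|x_i^{[k+1]}-z^*\right\|^2 .
\]
From \eqref{eq: new dual} we have $g_i=\rho(x_i^{[k+1]}-\hat z_i^{[k+1]})-\hat\lambda_i^{[k+1]}$. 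Substituting $x_i^{[k+1]}$ through the identity \eqref{eq: lemma error} then turns the left-hand side into an expression in the differences $\hat\lambda_i^{[k+1]}-\hat\lambda_i^{[k]}$, $\hat z_i^{[k+1]}-\hat z_i^{[k]}$ and the deviations from $(z^*,\lambda_i^*)$. Using the polarization identity $2a^\top b=\|a+b\|^2-\|a\|^2-\|b\|^2$, exactly as in the centralized case, I expect to obtain
\[
\hat{\mathcal L}^{[k]}-\hat{\mathcal L}^{[k+1]} \ \ge\ 4\sum_i \mu_i\left\|x_i^{[k+1]}-z^*\right\|^2 - E_k .
\]
In the exact algorithm, the cross term $\sum_i(\lambda_i-\lambda_i^*)^\top(z-z^*)$ vanishes because $\sum_i\lambda_i=0$ by \eqref{eq: lam}. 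Here it does not vanish, and the residual $E_k$ collects all terms involving $\sum_i\hat\lambda_i$ and the disagreement $\hat z_i-\hat z$.

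\textbf{Step 2 (bounding $E_k$).} By \eqref{eq: lam}, $\|\sum_i\hat\lambda_i\|\le 2\rho N\sqrt n\Delta$, and by \eqref{eq: error}, $\|\hat z_i-z\|\le 2\sqrt n\Delta$. The residual terms are products of these quantities with iterates that remain bounded on the sublevel set of $\hat{\mathcal L}$. This gives $E_k\le 4\,\mathcal O(N\sqrt n\Delta)$, where the constant absorbs $\rho$ and the bound on the iterates.

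\textbf{Step 3 (the key inequality \eqref{eq: condition}).} I need to show that $\|\hat z^{[k+1]}-z^*\|^2$ and $\|\hat\lambda_i^{[k+1]}-\lambda_i^*\|^2$ are controlled by $\sum_i\|x_i^{[k+1]}-z^*\|^2$.

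For the dual part, write $\hat\lambda_i^{[k+1]}-\lambda_i^* = \rho(x_i^{[k+1]}-\hat z^{[k+1]})-(g_i+\lambda_i^*)$. The Lipschitz bound \eqref{eq: lip} gives $\|g_i+\lambda_i^*\|=\|\nabla f_i(x_i^{[k+1]})-\nabla f_i(z^*)\|\le L_i\|x_i^{[k+1]}-z^*\|$.

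For the primal part, $\hat z^{[k+1]}$ equals an average of $x_j^{[k+1]}-g_j/\rho$ up to $\mathcal O(\sqrt n\Delta)$. Since $\sum_j\nabla f_j(z^*)=0$, this gives $\|\hat z^{[k+1]}-z^*\|\le \frac1N\sum_j(1+L_j/\rho)\|x_j^{[k+1]}-z^*\|$ plus a quantization error.

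Squaring and choosing $\delta=\min_i\mu_i/C(\rho,L_i,N)$ gives \eqref{eq: condition}, with the $\mathcal O(\Delta)$ remainder folded into the perturbation term.

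\textbf{Step 4 (conclusion).} Combining Steps 1--3 gives $(1+\delta)\hat{\mathcal L}^{[k+1]}\le\hat{\mathcal L}^{[k]}+4\,\mathcal O(N\sqrt n\Delta)$. Dividing by $1+\delta$ yields \eqref{eq: reault1}.

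The main obstacle is Step 2, in particular showing that the iterates stay bounded so that the cross terms are genuinely $\mathcal O(\Delta)$ and not $\mathcal O(\Delta\cdot\|\text{iterate}\|)$. I would first try an induction on $k$: if $\hat{\mathcal L}^{[k]}\le R$, then the recursion keeps $\hat{\mathcal L}^{[k+1]}\le\max\{R,\,4\,\mathcal O(N\sqrt n\Delta)/\delta\}$, which bounds all iterates uniformly. Failing that, I would absorb the cross terms with Young's inequality into a fraction of the $\mu_i$ term, at the cost of a smaller $\delta$ and an $\mathcal O(\Delta^2)$ residual that is dominated by $\mathcal O(\Delta)$ for small $\Delta$.
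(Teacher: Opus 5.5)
Your proposal is correct and follows essentially the same route as the paper's proof in Appendix~\ref{APP: global convergence error1}. Both use the strong-convexity inequality, substitute $x_i^{[k+1]}$ via \eqref{eq: lemma error}, apply polarization to get the telescoping parts of $\hat{\mathcal L}$, bound the leftover cross terms by $\|\sum_i\hat\lambda_i\|\le 2\rho N\sqrt n\Delta$ times a bound on the iterates, and finish with the $\delta$-condition carried over from Theorem~\ref{theorem 2}. Where you differ, it is in rigor and in your favor: the paper simply posits a uniform bound $M_z$ and states \eqref{eq: condition} with no quantization remainder, whereas your sublevel-set induction and your explicit $\Delta$-remainder in Step~3 actually justify those two steps (the induction still closes because the error term grows only like $\sqrt{R}$ in the level $R$, not as a constant as you wrote).
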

\textit{Proof.} See Appendix \ref{APP: global convergence error1}. \hfill$\blacksquare$

\begin{remark}[Quantization Error and Resulting Trade-offs]  \label{remark: tradeoff} 
    Focusing on \eqref{eq: reault1} of Theorem~\ref{the: convergence}, the term $\frac{4}{1+ \delta} \mathcal O(N\sqrt{n}\Delta)$ represents the quantization error introduced from Algorithm~\ref{alg:QuAS}. 
    As we will see later in Section~\ref{sec: numerical}, this error causes agents to converge to a $\Delta$-dependent neighborhood of the optimal solution. 
    While decentralized approaches to progressively refine $\Delta$ can enhance solution precision \cite{rikos2023distributed}, this typically incurs higher communication overhead in terms of bits per message compromising our algorithm's communication efficiency.
    In contrast, employing quantizers with base-shifting capabilities \cite{rikos2024finite_bit_zooming} allows for maintaining high communication efficiency while still enabling agents to approximate the optimal solution with greater precision.
    This latter strategy however, results in a trade-off as it may reduce the convergence speed of Algorithm~\ref{alg:QuAS}.
\end{remark} 

\begin{remark}[Upper Bound of Error Accumulation]
\label{remark error accumulation}
In Theorem~\ref{the: convergence} we established that \eqref{alg:ALADIN} enables agents to compute the optimal solution for the consensus optimization problem in \eqref{eq: reformulate3} with a global linear convergence rate. 
However, the term $\mathcal O(N \Delta)$ in \eqref{eq: reault1} represents the error due to quantized communication among agents. 
To derive an upper bound on the error accumulation during \eqref{alg:ALADIN}, during each time step $k$ we have 
\begin{equation}\label{eq: iteration}\small
    \begin{split}
        &\rho N \left\|\hat z^{[k]} -z^*\right\|^2\leq \hat{\mathcal L} \left(\hat z^{[k]}, \hat\lambda^{[k]} \right) \\
       \overset{\eqref{eq: reault1}}{\leq}  &\frac{1}{1+\delta} \hat{\mathcal L} \left(\hat z^{[k-1]}, \hat\lambda^{[k-1]} \right)+\frac{1}{1+\delta} \mathcal  O(4N\sqrt{n}\Delta)\\
       \vdots\\
        \overset{\eqref{eq: reault1}}{\leq} &\left(\frac{1}{1+\delta} \right)^{k-1} \hat{\mathcal L}\left(\hat z^{[1]}, \hat\lambda^{[1]} \right)+ V(\delta,k)\; \mathcal O\left(\frac{4}{\delta}N\sqrt{n}\Delta\right),
    \end{split}
\end{equation}
where $\left(\frac{1}{1+\delta} \right)^{k-1}<1$ and $V(\delta,k) =  \left(1- \left( \frac{1}{1+\delta}\right)^{k-1} \right)$. 
This analysis demonstrates that after $k$ iterations of \eqref{alg:ALADIN}, the error accumulation from \eqref{eq: reault1} is upper-bounded by $V(\delta,k)\; \mathcal O\left(\frac{4}{\delta}N\sqrt{n}\Delta\right)$. 
This upper bound is directly influenced by the quantization level $\Delta$ employed in our proposed algorithm.
\end{remark}

\subsubsection{Local Convergence Analysis}\label{sec: Local Convergence Analysis2}

In this section, we establish the local convergence properties of Algorithm \ref{alg:Bilevel Consensus ALADIN} and Algorithm \ref{alg: Decentralized Reduced BFGS Consensus ALADIN}. Prior to presenting the convergence theory, we introduce the following assumption, which ensures the convergence of both algorithms.

\begin{assumption}[Lipschitz Continuous]\label{ass:3} 
The local cost function $f_i$ of each agent $i \in \mathcal{V}$ is closed, proper, $L$-smooth and  convex. 
Specifically, for each local cost function $f_i$, for every $x_\alpha,x_\beta \in \mathbb R^n$, there exist a \emph{Lipschitz-continuity} constant $L_i>0$ (see \cite[equation (51)]{ling2015dlm}) such that \eqref{eq: lip} holds.
\end{assumption}
Assumption~\ref{ass:3} ensures the Lipschitz continuity of the gradients in \eqref{eq: lip}, which is employed in the following theorem to derive an upper bound on the estimation error. This is a standard and widely adopted assumption in optimization theory.

Now, we provide the local convergence theory of Algorithm \ref{alg:Bilevel Consensus ALADIN} by showing the follow theorem.
\begin{theorem}[Local Convergence  of Algorithm \ref{alg:Bilevel Consensus ALADIN}]\label{them: local convergence error1}
Consider a directed graph $\mathcal{G} = (\mathcal{V}, \mathcal{E})$ where each agent $i \in \mathcal{V}$ possesses a local cost function $f_i$. Under Assumptions~\ref{ass:1},~\ref{ass:02}, and~\ref{ass:3}, each agent executes Algorithm~\ref{alg:Bilevel Consensus ALADIN} in a decentralized manner to solve the consensus optimization problem~\eqref{eq: reformulate3}. Provided that the variables $x_i$ and $z$ are initialized within a neighborhood of a local optimum $z^*$, the iterations of Algorithm~\ref{alg:Bilevel Consensus ALADIN} achieve local convergence at a linear rate.
	\end{theorem}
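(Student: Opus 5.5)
The approach is to treat Algorithm~\ref{alg:Bilevel Consensus ALADIN} as an inexact version of Algorithm~\ref{alg:BFGS ALADIN2}. The exact iteration is known to converge locally by Theorem~\ref{them: local convergence}. The inexactness comes only from the inner level, where the consensus QP in \eqref{alg:ALADIN} is solved by \eqref{eq: QuDRC-ALADIN} instead of the closed form \eqref{eq: closed form}. Concretely, I would write one outer iteration as a map $(z^{[k]},\lambda^{[k]})\mapsto (z^{[k+1]},\lambda^{[k+1]})=\Phi(z^{[k]},\lambda^{[k]})+e^{[k]}$. Here $\Phi$ is the exact outer step of Algorithm~\ref{alg:BFGS ALADIN2}, and $e^{[k]}$ is the deviation of the inner-loop output $(\hat z_i^{[r+\tau]},\hat\lambda_i^{[r+\tau]})$ from the exact QP solution.

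The first step is to establish contraction of the exact map near $(z^*,\lambda^*)$. Under Assumption~\ref{ass:02}, $\nabla^2 f_i(z^*)\succ0$, so by the implicit function theorem the local NLP \eqref{eq: new local primal} has a unique, Lipschitz solution $x_i^{[k+1]}$ near $z^*$. Following the local analysis of \cite[Section 7]{Houska2016} and \cite[Theorem 3]{Du2025ACC}, one gets
\[
\|x_i^{[k+1]}-z^*\|\le c_1\|(z^{[k]},\lambda^{[k]})-(z^*,\lambda^*)\|.
\]
The QP step with $B_i^{[k+1]}$ then acts as an SQP/inexact Newton step. Using the $L$-smoothness in Assumption~\ref{ass:3}, which bounds $\|\nabla f_i(x)-\nabla f_i(z^*)-\nabla^2 f_i(z^*)(x-z^*)\|$ and the BFGS mismatch $\|B_i^{[k+1]}-\nabla^2 f_i(z^*)\|$ (assumed small near $z^*$, or at least bounded with bounded inverse), I would aim for
\[
\|\Phi(z,\lambda)-(z^*,\lambda^*)\|\le \gamma\|(z,\lambda)-(z^*,\lambda^*)\|
\]
with $\gamma<1$ in a sufficiently small neighborhood. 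If the BFGS matrices are only bounded, rather than accurate, $\gamma<1$ would have to come from the $\rho$-proximal structure instead of superlinear arguments. I would try this first by linearising around $z^*$ and checking the spectral radius of the resulting linear map.

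The second step, which I expect to be the main obstacle, is bounding $e^{[k]}$. The inner problem is a strongly convex quadratic consensus problem, since $B_i^{[k+1]}\succ0$, so Theorem~\ref{the: convergence} applies with $\mu_i=\lambda_{\min}(B_i)$ and $L_i=\lambda_{\max}(B_i)$. It gives linear convergence of the inner Lyapunov function $\hat{\mathcal L}$ up to an $\mathcal O(N\sqrt n\Delta)$ neighborhood. Together with the stopping rule \eqref{eq: inner level stop} and Remark~\ref{remark error accumulation}, this yields
\[
\|e^{[k]}\|\le c_2(\epsilon+\sqrt{N\sqrt n\Delta}).
\]
One subtlety is that the inner solver uses $\rho$-weighted updates while the QP uses $B_i$. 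I would handle this by applying \eqref{eq: QuDRC-ALADIN} to the quadratic costs $\frac12\Delta x_i^\top B_i\Delta x_i+g_i^\top\Delta x_i$, whose minimizer is exactly the QP solution. Uniformity of $\delta$ over $k$ follows because the $B_i$ stay in a compact set of positive definite matrices near $z^*$.

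Finally, I would combine the two steps into the recursion
\[
\|w^{[k+1]}-w^*\|\le\gamma\|w^{[k]}-w^*\|+c_2(\epsilon+\sqrt{N\sqrt n\Delta}).
\]
An induction shows that the iterates remain in the neighborhood where the first step holds, provided that neighborhood radius exceeds $c_2(\epsilon+\sqrt{N\sqrt n\Delta})/(1-\gamma)$. It then follows that they converge linearly, at rate $\gamma$, to a ball of radius $\mathcal O(\epsilon+\sqrt{\Delta})$ around $z^*$. This is the sense of local linear convergence in the statement, and it becomes exact convergence as $\epsilon,\Delta\to0$.
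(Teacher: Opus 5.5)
Your outline is sound and shares the paper's two core estimates. The first is the bound $\|x_i^{[k+1]}-z^*\|\le\frac{\rho}{\sigma}\|\hat z_i^{[k]}-z^*\|+\frac{1}{\sigma}\|\hat\lambda_i^{[k]}-\lambda_i^*\|$, obtained from the optimality conditions of the local NLP under Assumption~\ref{ass:02}; this is your implicit-function step. The second is an SQP-type estimate for the consensus QP, driven by a small BFGS mismatch $\|B_i^{[k+1]}-\nabla^2 f_i\|<\gamma$.

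The genuine difference is how you handle the inner-level error. The paper never bounds it independently. It inserts $\tilde\epsilon=\|\hat z_i^{[k+1]}-z^{[k+1]}\|$ into \eqref{eq: step2-linear2} as a \emph{relative} error, $N\tilde\epsilon+N\|z^{[k+1]}-z^*\|\le\gamma\sum_i\|x_i^{[k+1]}-z^*\|$, which is postulated to hold ``prior to convergence''. While that inexact-Newton-type condition holds, this gives a pure contraction with factor $(\rho+1)\gamma/\sigma$. Once it fails, the paper simply asserts convergence to a neighborhood of radius $\epsilon$.

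You instead treat the inner error as an \emph{absolute} perturbation, bounded through Theorem~\ref{the: convergence} applied to the quadratic costs. You then close with the recursion $\|w^{[k+1]}-w^*\|\le\gamma\|w^{[k]}-w^*\|+\|e^{[k]}\|$, plus an induction showing the iterates stay where the local estimates are valid. This buys you an explicit invariant neighborhood and a limit radius $\mathcal O((\epsilon+\sqrt{\Delta})/(1-\gamma))$ that makes the quantization floor visible; the paper states the neighborhood only in terms of $\epsilon$. The price is that you need a uniform bound on $\|e^{[k]}\|$ rather than assuming it away.

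That uniform bound is the step you must make precise. The stopping rule \eqref{eq: inner level stop} only controls increments of $\hat z$, not the distance to the QP solution. Remark~\ref{remark error accumulation} only gives $\hat{\mathcal L}^{[r]}\le(1+\delta)^{-(r-1)}\hat{\mathcal L}^{[1]}+\mathcal O(\frac{4}{\delta}N\sqrt{n}\Delta)$. So you need one of two things:
\begin{itemize}
\item enough inner iterations, uniformly in $k$ (using warm starts and compactness of the $B_i$), to make the transient term small; or
\item an argument turning small $\hat z$-increments over $\tau$ steps into a small $\hat{\mathcal L}$.
\end{itemize}
The paper leaves this equally implicit.

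A minor point: the $o(\|x-z^*\|)$ linearization remainder needed for the SQP contraction comes from the $C^2$ part of Assumption~\ref{ass:02}. Lipschitz continuity of the gradient alone only bounds it by $\mathcal O(L_i\|x-z^*\|)$, which is not small. Also, your fallback of getting $\gamma<1$ from the proximal structure with merely bounded $B_i$ is not needed. Keep the accuracy assumption on $B_i$, as the paper does; the paper reports that $B_i=\rho I$ diverges on its non-convex example.
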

	\textbf{Proof:} See Appendix \ref{APP: local convergence error 1}. \hfill$\blacksquare$

A similar local convergence result holds for Algorithm~\ref{alg: Decentralized Reduced BFGS Consensus ALADIN}; the proof is provided in Appendix~\ref{APP: local convergence error 2}.



\section{Numerical Simulation}\label{sec: numerical}


Numerical experiments are conducted on both convex and non-convex instances of the distributed consensus optimization In this section, we use problem \eqref{eq: reformulate3} to evaluate the performance of the proposed algorithms and to highlight their improvements over existing distributed optimization methods. All simulations are implemented using \texttt{CasADi-v3.5.5} in conjunction with the \texttt{IPOPT} solver~\cite{Andersson2019}.

\subsection{Convex Case}

For the convex setting, we consider a distributed consensus least-squares problem following the formulation in~\cite{Du2023_Arxiv}:
\begin{equation}\label{eq: convex example}\small
\begin{aligned}
\min_{x_i, \forall i \in \mathcal{V}} \quad  \sum_{i=1}^{N} \frac{1}{2}\|x_i - \zeta_i\|^2 \qquad
\text{s.t.} \quad & x_i = z, \quad \forall i \in \mathcal{V}.
\end{aligned}
\end{equation}
The problem is defined over a randomly generated directed graph with $N = 20$ agents, where $x_i, z \in \mathbb{R}^{10}$. The local data $\zeta_i$ are sampled from a Gaussian distribution $\mathcal{N}(0,1)$. In this setup, the problem involves $210$ primal variables and $200$ dual variables. The convergence performance is evaluated by plotting the consensus error $\sum_{i=1}^N \|x_i^{[k]} - x_i^*\|_1$, where $x^* = [(z^*)^\top, (z^*)^\top, \dots, (z^*)^\top]^\top\in \mathbb R^{Nn}$ denotes the optimal solution of \eqref{eq: convex example}. According to \eqref{eq: lemma} and \eqref{eq: lemma error}, the convergence behavior of the primal variables $x_i$ coincides with that of the global variable $z$ and the dual variables $\lambda_i$; therefore, the same convergence properties apply to all variables considered throughout the algorithms proposed in this paper.

\begin{figure}[ht]
	\centering
\includegraphics[width=0.5\textwidth,height=0.3\textheight]{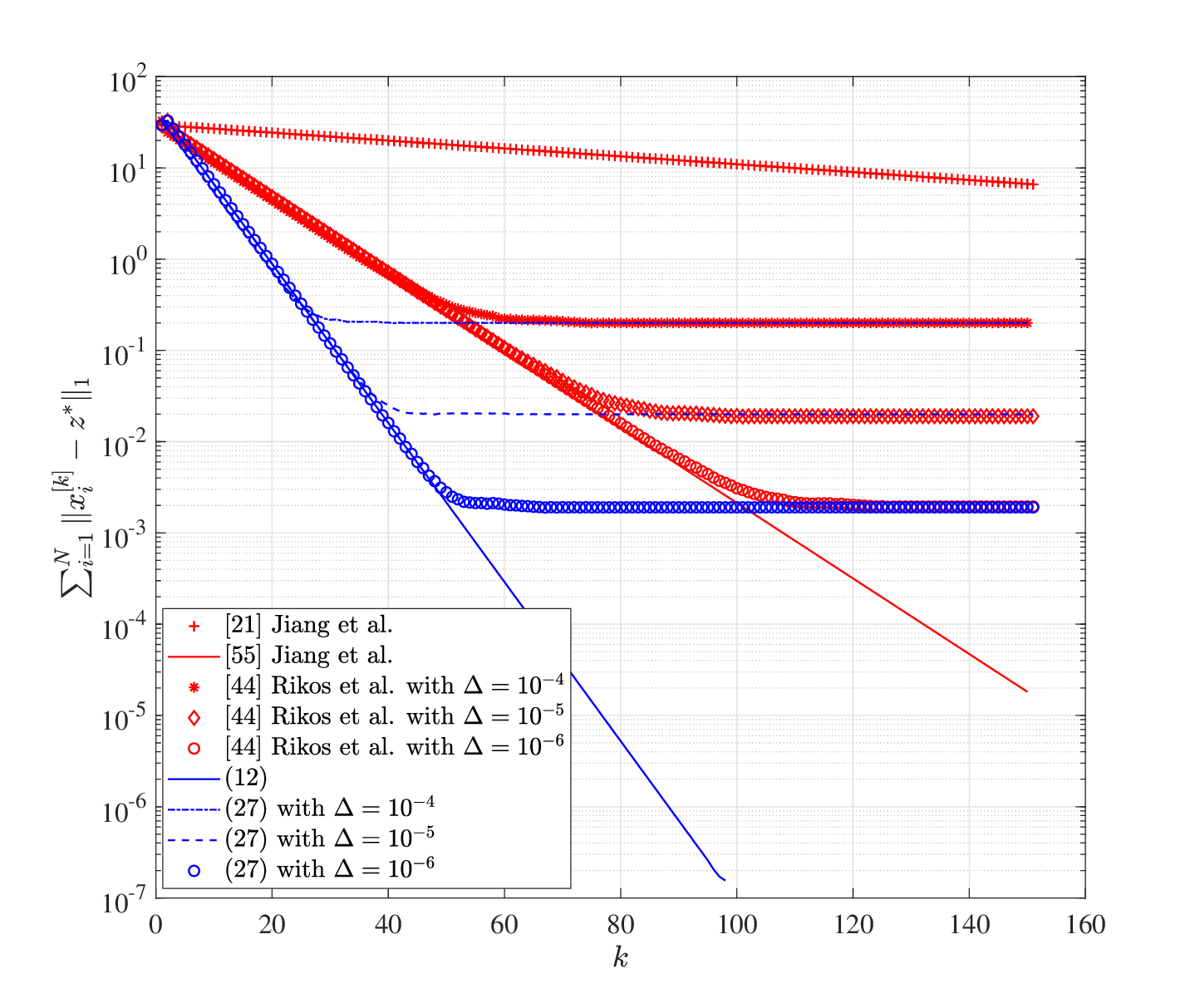}
	\caption{Comparison of convergence performance among Pull-FTERC~\cite{jiang2022fast}, AsyAD-ADMM~\cite{jiang2021asynchronous}, QuAsyADMM~\cite{rikos2023asynchronous}, the proposed first-order C-ALADIN  \eqref{eq: RCA}, and its quantized decentralized version \eqref{eq: QuDRC-ALADIN} under different quantization levels $\Delta = 10^{-4}, 10^{-5}, 10^{-6}$.}
	\label{fig: convex first comparison1}
\end{figure}

Fig.~\ref{fig: convex first comparison1} compares the convergence behavior of Pull-FTERC~\cite{jiang2022fast}, AsyAD-ADMM~\cite{jiang2021asynchronous}, QuAsyADMM~\cite{rikos2023asynchronous}, the proposed first-order C-ALADIN \eqref{eq: RCA}, and its quantized decentralized version \eqref{eq: QuDRC-ALADIN} under different quantization levels $\Delta = 10^{-4}, 10^{-5}, 10^{-6}$. The results demonstrate that ADMM-type algorithms converge faster than Pull-FTERC~\cite{jiang2022fast} due to their utilization of dual information. QuAsyADMM converges to a neighborhood of the optimal solution of problem \eqref{eq: convex example}, with smaller $\Delta$ values yielding higher accuracy, as theoretically characterized in~\cite{rikos2023asynchronous}. The first-order C-ALADIN achieves accelerated convergence compared to Consensus ADMM by utilizing more gradient information per iteration, as discussed in \cite[Appendix]{Du2025ACC}. The fully decentralized quantized version of first-order C-ALADIN offers a favorable trade-off between communication efficiency and solution accuracy: increasing $\Delta$ reduces bandwidth consumption at the expense of precision, while the coordinator‑free design maintains scalability and robustness in large‑scale networks.


\begin{figure}[ht]
	\centering
\includegraphics[width=0.45\textwidth,height=0.24\textheight]{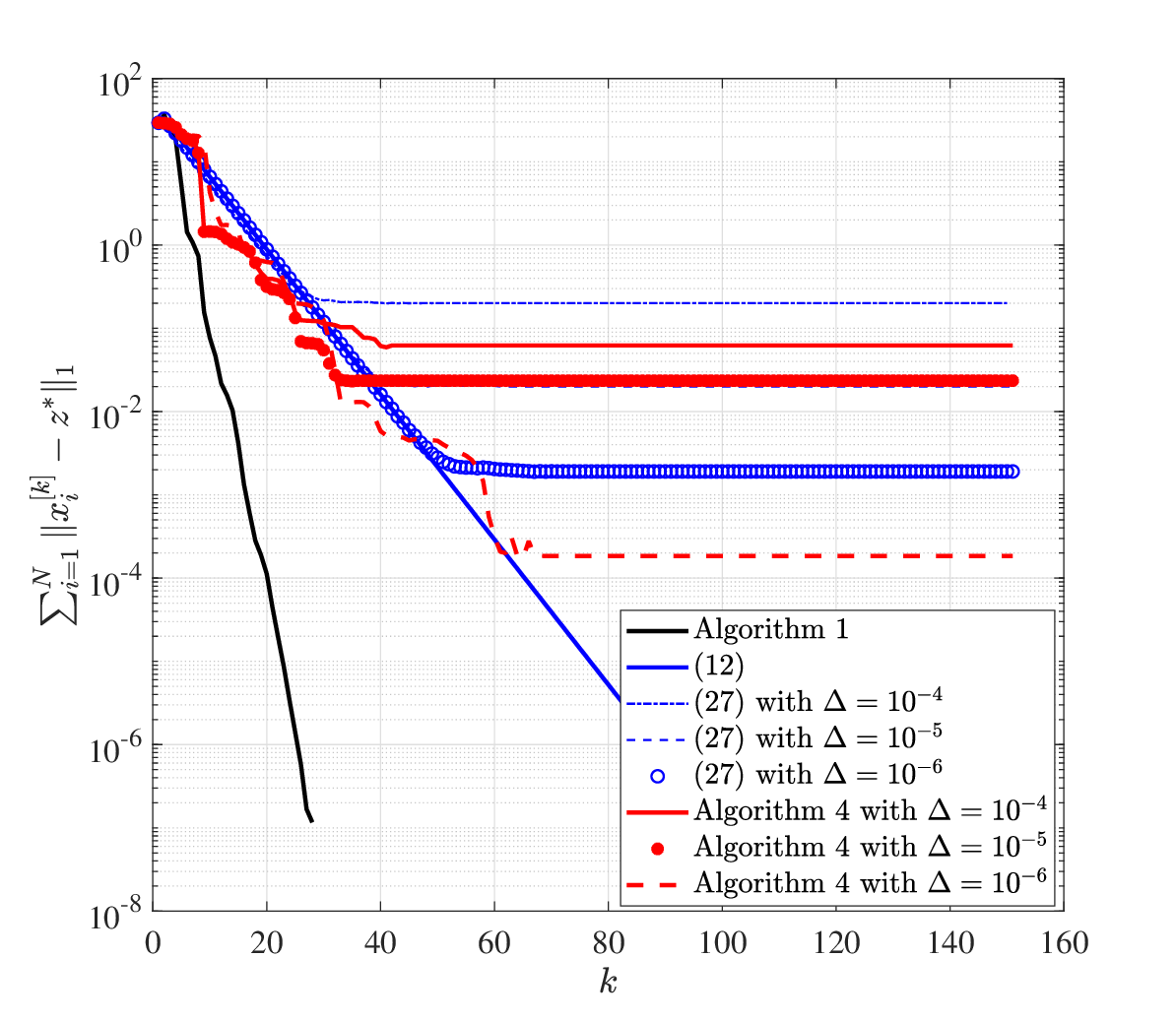}
	\caption{Convergence Algorithm~\ref{alg: Decentralized Reduced BFGS Consensus ALADIN}among Algorithm~\ref{alg:BFGS ALADIN2}, the first-order C-ALADIN in \eqref{eq: RCA}, its quantized decentralized version \eqref{eq: QuDRC-ALADIN}, and the decentralized approximate second-order C-ALADIN (Algorithm~\ref{alg: Decentralized Reduced BFGS Consensus ALADIN}) under different quantization levels $\Delta = 10^{-4}, 10^{-5}, 10^{-6}$.}
	\label{fig: convex second comparison3}
\end{figure}

Fig.~\ref{fig: convex second comparison3} illustrates the convergence behavior of the second-order C-ALADIN (Algorithm~\ref{alg:BFGS ALADIN2}), the first-order C-ALADIN \eqref{eq: RCA}, its quantized decentralized version \eqref{eq: QuDRC-ALADIN}, and the decentralized approximate second-order C-ALADIN (Algorithm~\ref{alg: Decentralized Reduced BFGS Consensus ALADIN}) under quantization levels $\Delta = 10^{-4}$, $10^{-5}$, and $10^{-6}$. The results show that Algorithm~\ref{alg:BFGS ALADIN2}, which incorporates second-order information, achieves significantly faster convergence. In this convex setting, the decentralized approximate second-order variant does not exhibit a notable advantage over the quantized decentralized first-order variant in convergence speed; its superiority becomes even more pronounced in the subsequent non-convex case study.

\begin{figure}[ht]
	\centering
\includegraphics[width=0.37\textwidth,height=0.2\textheight]{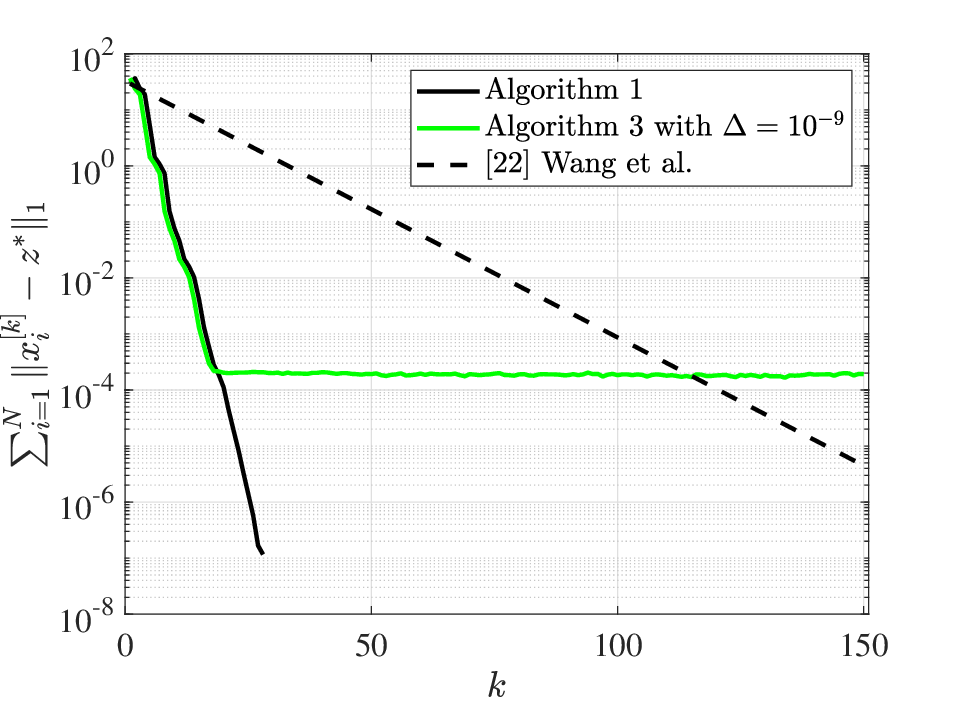}
	\caption{Convergence comparison among Algorithm~\ref{alg:BFGS ALADIN2}, Algorithm~\ref{alg:Bilevel Consensus ALADIN} with $\Delta = 10^{-9}$, and GIANT~\cite{wang2018giant}.}
	\label{fig: convex second comparison4}
\end{figure}

Fig.~\ref{fig: convex second comparison4} illustrates the convergence behavior of GIANT~\cite{wang2018giant}, the second-order C-ALADIN (Algorithm~\ref{alg:BFGS ALADIN2}), and its bi-level quantized decentralized version (Algorithm~\ref{alg:Bilevel Consensus ALADIN}) with quantization level $\Delta = 10^{-9}$. Due to the bi-level structure of Algorithm~\ref{alg:Bilevel Consensus ALADIN}, its convergence is highly sensitive to the accuracy of the inner consensus QP solver; therefore, only the case with $\Delta = 10^{-9}$ is shown. In this setting, Algorithm~\ref{alg:Bilevel Consensus ALADIN} converges nearly as fast as Algorithm~\ref{alg:BFGS ALADIN2}, reaching a neighborhood of the optimal solution. For comparison, the centralized GIANT method, which is based on primal decomposition, is included. 

\subsection{Non-convex Case}

For the same numerical example as in our conference version~\cite{Du2025ACC}, we consider a non-convex sensor allocation application adapted from~\cite[Section 8]{Houska2016}. The resulting non-convex distributed consensus optimization problem is formulated as
\begin{equation}\label{eq: non-convex problem}\small
		\begin{aligned}
			\min_{x_i,z,\forall i \in \mathcal I}&\quad \sum_{i=1}^{N}\left(\frac{1}{2}\left(\|x_i^\alpha- \zeta_i^\alpha\|^2+\|x_i^\beta- \zeta_i^\beta\|^2\right)\right.\\
			&\quad\quad\;\;\left.+\sum_{j=1}^{10}\left(\left(x_{i}^\alpha[j]-x_{i}^\beta[j]\right)^2-\zeta_{i}^\sigma[j]\right)^2\right) \\ \quad\mathrm{s.t.}\quad&\quad x_i = z,
		\end{aligned}
\end{equation}
where $x_i = [(x_i^\alpha)^\top, (x_i^\beta)^\top]^\top$, and $(\cdot)[j]$ denotes the $j$-th component of the corresponding vector. All measured data $\zeta_i^\alpha,\zeta_i^\beta, \zeta_i^\sigma$ and parameter settings are consistent with those used in the convex example~\eqref{eq: convex example}. The first-order variants of C-ALADIN diverge in this non-convex setting and are therefore omitted from the comparison. 

\begin{figure}[ht]
	\centering
\includegraphics[width=0.46\textwidth,height=0.26\textheight]{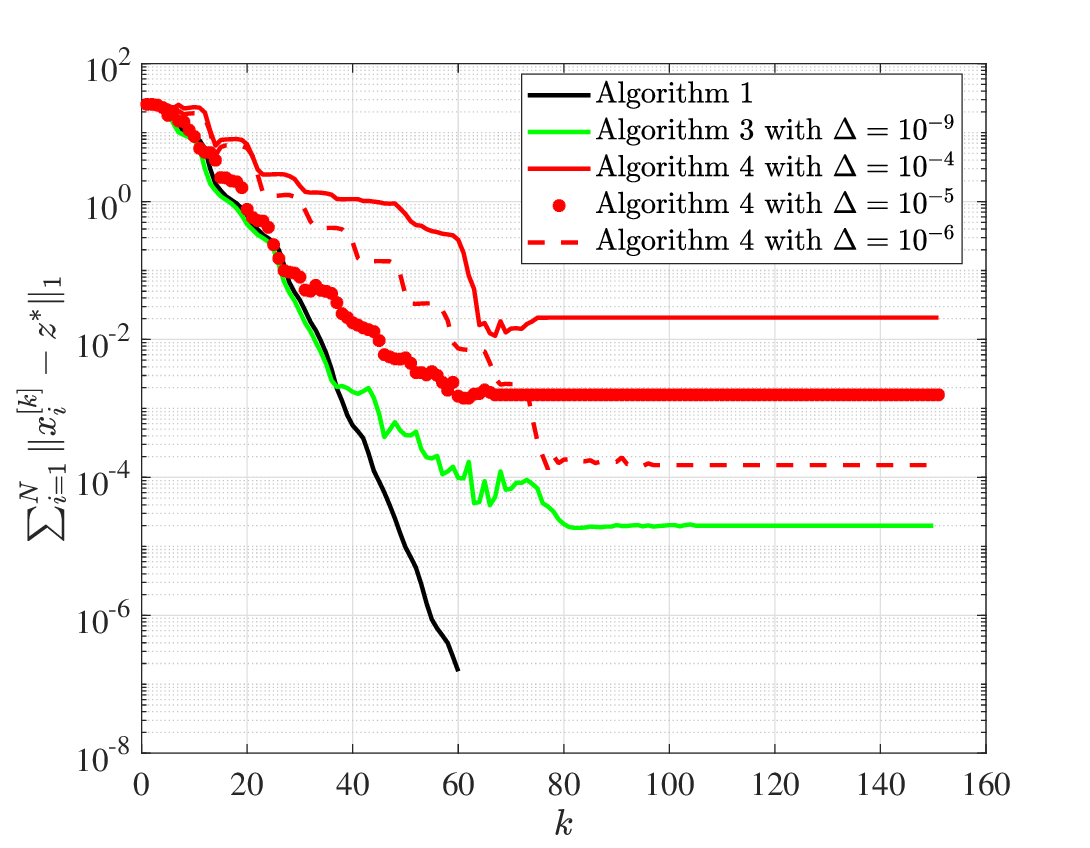}
	\caption{Convergence comparison among Algorithm~\ref{alg:BFGS ALADIN2}, Algorithm~\ref{alg:Bilevel Consensus ALADIN} with $\Delta = 10^{-9}$, and Algorithm~\ref{alg: Decentralized Reduced BFGS Consensus ALADIN} under different quantization levels $\Delta = 10^{-4}, 10^{-5}, 10^{-6}$.}
	\label{fig: comparison}
\end{figure}

Fig.~\ref{fig: comparison} illustrates the convergence behavior of the second-order C-ALADIN (Algorithm~\ref{alg:BFGS ALADIN2}), its bi-level quantized decentralized version (Algorithm~\ref{alg:Bilevel Consensus ALADIN}) with $\Delta = 10^{-9}$, and the decentralized approximate second-order C-ALADIN (Algorithm~\ref{alg: Decentralized Reduced BFGS Consensus ALADIN}) under quantization levels $\Delta = 10^{-4}, 10^{-5}, 10^{-6}$ for the non-convex application \eqref{eq: non-convex problem}. All second-order C-ALADIN variants demonstrate local convergence to a neighborhood of a stationary point in this non-convex setting. Notably, Algorithm~\ref{alg:Bilevel Consensus ALADIN} and Algorithm~\ref{alg: Decentralized Reduced BFGS Consensus ALADIN} rely exclusively on decentralized quantized averaging for network communication, achieving a balance between communication efficiency and solution accuracy. For the latter algorithm, a sensitivity to quantization noise is observed; it is therefore omitted for the case $\Delta = 10^{-4}$ due to numerical instability. Although GIANT~\cite{wang2018giant} is also a second-order method, it fails to converge on this non-convex problem; it is not included in the comparison.

\begin{remark}[On Solving \eqref{eq: reformulate1} with typical ALADIN]
This work addresses distributed consensus optimization for both convex and non-convex problems. While such problems can be reformulated as resource allocation and solved using typical ALADIN~\eqref{eq: T-ALADIN} (as shown in \cite{Houska2016}), we note that typical ALADIN involves substantial communication and computational overhead. More importantly, typical ALADIN exhibits essentially equivalent convergence behavior to C-ALADIN, differing only in the linear constraint formulation in \eqref{eq: DOPT_G2}. Given this equivalence and the significant implementation complexity of typical ALADIN, we omit numerical comparisons in this study. For implementation details of typical ALADIN on Problem~\eqref{eq: reformulate1}, we refer readers to \cite[Section 12]{aadhithya2023learning}.
\end{remark}

\begin{remark}[On the Effect of Penalty Parameter $\rho$]
The numerical study in \cite[Section 5.1]{Houska2021} demonstrates the empirical effect of the penalty parameter $\rho$ in C-ALADIN \eqref{alg:ALADIN} on convergence rates for convex problems under Assumption~\ref{ass:2}. While convergence speed is observed to depend on $\rho$, a theoretical characterization of this relationship remains an open problem. A thorough investigation of this dependency, including both theoretical analysis and comprehensive numerical comparison, is beyond the scope of this paper and represents an important direction for future research.
\end{remark}

\section{Conclusion}\label{sec: conclusion}
This paper proposes Consensus ALADIN (C-ALADIN) and its variants for efficiently solving distributed consensus optimization problems. Within this framework, we design an efficient structure that enhances both communication and computational efficiency. Building on this framework, we develop centralized and quantized decentralized variants of C-ALADIN, each of which can be further classified into first-order and second-order implementations. We establish the convergence properties of C-ALADIN for both convex and non-convex settings and observe that, under decentralized operation with quantized communication, the proposed algorithms converge to a neighborhood of the optimal solution. Finally, numerical simulations are presented to validate the performance of our algorithms and to highlight their advantages over existing methods in the literature.


\appendices

\section{PROOF OF THEOREM \ref{theorem 2}}\label{APP: theorem 2}
					
Focusing on equation \eqref{eq: mu}, the following symmetric inequalities can be obtained 
\begin{equation}\small
\label{eq: strongly convex2} \left\{
\begin{aligned}
     & f_i(x_\alpha)+\nabla f_i(x_\alpha)^\top (x_\beta-x_\alpha) +\frac{\mu_i}{2}\|x_\beta-x_\alpha\|^2\leq f_i(x_\beta)\\
    & f_i(x_\beta)+\nabla f_i(x_\beta)^\top (x_\alpha-x_\beta) +\frac{\mu_i}{2}\|x_\alpha-x_\beta\|^2\leq f_i(x_\alpha).
\end{aligned}\right.
\end{equation}
By adding the two inequalities in \eqref{eq: strongly convex2} and setting $x_\alpha = x_i^{[k+1]}$, $x_\beta = z^*$, we obtain
\begin{equation}\label{eq: strongly convex0}\small
    \mu_i \left\| x_i^{[k+1]} - z^* \right\|^2 \leq \left( \nabla f_i(x_i^{[k+1]}) - \nabla f_i(z^*) \right)^\top \left( x_i^{[k+1]} - z^* \right).
\end{equation}
Under Assumption~\ref{ass:2}, each $f_i$ is smooth; hence $g_i = \nabla f_i(x_i^{[k+1]})$ and $g_i^* = \nabla f_i(z^*)$. Summing the $N$ inequalities in \eqref{eq: strongly convex0} yields
\begin{equation}\label{eq: strongly convex3}\small
    \sum_{i=1}^N \mu_i \left\| x_i^{[k+1]} - z^* \right\|^2 \leq \sum_{i=1}^N \left( g_i - g_i^* \right)^\top \left( x_i^{[k+1]} - z^* \right).
\end{equation}
Therefore, from \cite[equation (23) - (26)]{Du2025ACC}, equation \eqref{eq: strongly convex3} can be represented as,
	\begin{equation}\label{eq: linear rate}\small
		\begin{aligned}
			 \sum_{i=1}^{N}\mu_i\left\| x_i^{[k+1]} -z^*\right\|^2 
			\leq &\sum_{i=1}^{N} \left(x_i^{[k+1]}-z^* \right)^\top \left( g_i -g_i^* \right)\\
             = &	\frac{1}{4}\mathscr{L}\hspace{-0.5mm}\left(\hspace{-0.5mm}z^{[k]},\lambda^{[k]}\hspace{-1mm}\right)\hspace{-1mm}-\hspace{-1mm}\frac{1}{4}\mathscr{L}\left(z^{[k+1]},\lambda^{[k+1]}\right).
		\end{aligned}
	\end{equation}

We now prove the existence of $\delta$ in \eqref{eq: dela}. The updates of $z^{[k+1]}$ and $\lambda_i^{[k+1]}$ from \eqref{eq: closed form} satisfy the following relations:
\begin{equation*}\label{eq: smooth1}\small
    \begin{split}
        z^{[k+1]} - z^* 
       \overset{\eqref{eq: global z}}{=} & \left( \sum_{i=1}^N B_i  \right)^{-1} \hspace{-2mm}\left( \sum_{i=1}^N B_i x_i^{[k+1]}  - B_i z^* - g_i + g_i^*  \right),\\
       \lambda_i^{[k+1]} - \lambda_i^*
         \overset{\eqref{eq: dual}}{=}& B_i(x_i^{[k+1]} - z^{[k+1]}) - g_i + g_i^*\\
         =& B_i(x_i^{[k+1]} - z^*) + B_i(z^* - z^{[k+1]}) - g_i + g_i^*.
    \end{split}
\end{equation*}
Note that $\sum_{i=1}^N g_i^* = 0$ follows from the optimality conditions of \eqref{eq: reformulate1}.
Using the Lipschitz gradient assumption $\|g_i - g_i^*\| \le L_i \|x_i^{[k+1]} - z^*\|$ (Assumption~\ref{ass:2}) and the triangle inequality, there exist constants $C_z < \infty$ and $C_\lambda < \infty$ such that
\begin{subequations}\label{eq: upper_bounds}
\begin{align}
\sum_{i=1}^N \left\|z^{[k+1]} - z^*\right\|_{B_i}^2 &\le C_z \sum_{i=1}^N \left\|x_i^{[k+1]} - z^*\right\|_{B_i}^2, \label{eq: upper_z}\\
\sum_{i=1}^N \left\|\lambda_i^{[k+1]} - \lambda_i^*\right\|_{B_i^{-1}}^2 &\le C_\lambda \sum_{i=1}^N \left\|x_i^{[k+1]} - z^*\right\|_{B_i}^2. \label{eq: upper_lambda}
\end{align}
\end{subequations}
These constants are uniform upper bounds over all iterations. Adding \eqref{eq: upper_z} and \eqref{eq: upper_lambda} yields
\begin{equation}\label{eq: Lyapunov_upper}
\mathscr{L}(z^{[k+1]},\lambda^{[k+1]}) \le (C_z + C_\lambda) \sum_{i=1}^N \left\|x_i^{[k+1]} - z^*\right\|_{B_i}^2.
\end{equation}
Combining \eqref{eq: Lyapunov_upper} with the strong convexity inequality $4\sum_{i=1}^N \mu_i \left\|x_i^{[k+1]} - z^*\right\|^2 \ge 4\mu_{\min} \sum_{i=1}^N \left\|x_i^{[k+1]} - z^*\right\|^2$, where $\mu_{\min} = \min_i \mu_i$, and the fact that $\left\|x_i^{[k+1]} - z^*\right\|_{B_i}^2 \le \|B_i\| \left\|x_i^{[k+1]} - z^*\right\|^2$, we obtain \eqref{eq: dela} with 
$
\delta = \frac{4\mu_{\min}}{(C_z + C_\lambda) \max_i \|B_i\|} > 0.
$
Hence the constant $\delta$ in \eqref{eq: dela} exists.

	Combining the result of equation \eqref{eq: linear rate} 
	and \eqref{eq: dela}, \eqref{eq: descent} is then proved.
	Based on the one-step linear convergence rate in \eqref{eq: descent}, the global linear convergence in \eqref{eq: linear convergence proof1} can be established through cumulative multiplication of the contraction factors. Theorem \ref{theorem 2} is proved.

	\section{Proof of Theorem \ref{the: convergence}}\label{APP: global convergence error1}
Before starting the proof, let us recall the following equality 
\begin{equation}\label{eq: well-known equality}\begin{aligned}
    2\left(\alpha-\gamma \right)^\top \left(\alpha-\beta \right) =
    \left\|\alpha-\gamma \right\|^2 - \left\|\beta-\gamma \right\|^2+\left\|\alpha-\beta \right\|^2,
\end{aligned}
\end{equation}
for $\alpha, \beta, \gamma \in \mathbb R^n$.  
Moreover, since the convergence of \eqref{eq: BFGS ALADIN} has been established in Appendix \ref{APP: theorem 2}, demonstrating that $\left\|z - z^* \right\| $ is bounded, we follow the analysis in \cite{jiang2021asynchronous} by defining the finite positive scalar $0 < M_z < \infty$ to simplify our subsequent analysis. This ensures that $\left\|z_i - z^* \right\| \leq M_z$ holds for every agent $i \in \mathcal{V}$.

We start from equation \eqref{eq: strongly convex3}. For \eqref{eq: QuDRC-ALADIN}, the right hand side of \eqref{eq: strongly convex3} can be represented as,
    \begin{align}\label{eq: simplify}\small
        & \sum_{i=1}^N\left (g_i -g_i^*\right)^\top \left(x_i^{[k+1]} -z^* \right)\\
        \overset{\eqref{eq: new gradient}}{=}& \sum_{i=1}^N\left (\rho\left(\hat z_i^{[k]}-x_i^{[k+1]}\right)-\hat\lambda_i^{[k]} +\lambda_i^*\right)^\top \left(x_i^{[k+1]} -z^* \right)\notag\\
        \overset{\eqref{eq: lemma error}}{=}& \sum_{i=1}^N\left (\rho\left(
        \frac{\hat z_i^{[k]}-\hat z^{[k+1]}_i}{2}
        -\frac{\hat \lambda_i^{[k+1]}-\hat\lambda_i^{[k]}}{2\rho}\right)-\hat \lambda_i^{[k]} +\lambda_i^*\right)^\top\notag \\
        &\hspace{2.5mm}\quad\left(\frac{\hat \lambda_i^{[k+1]}-\hat\lambda_i^{[k]}}{2\rho}+\frac{\hat z^{[k+1]}_i+\hat z_i^{[k]}}{2} -z^* \right).\notag
    \end{align}
We now split the right-hand side of \eqref{eq: simplify} into the following components (a), (b) (c), (d), (e). 
Then, we analyze each component separately. 
{\small \begin{align}\small
        (a) &\quad         \frac{1}{4}\sum_{i=1}^N \left(\hat z_i^{[k]} - \hat z_i^{[k+1]} \right)^\top \left( \hat\lambda_i^{[k+1]} - \hat\lambda_i^{[k]}\right)\notag\label{eq: 1}\\
          & = \frac{1}{4}\sum_{i=1}^N \left(\left( z_i^{[k]}-z^*\right) - \left( z_i^{[k+1]}-z^* \right)\right.\\
          &\hspace{1.3cm}+\left.\left( \hat z_i^{[k]}-z_i^{[k]} \right) -\left( \hat z_i^{[k+1]}-z_i^{[k+1]} \right) \right)^\top \notag\\
           &\;\qquad\qquad\left(\left( \lambda_i^{[k]}-\lambda_i^*\right) - \left( \lambda_i^{[k+1]}-\lambda_i^* \right)\right.\notag \\
           &\hspace{1.3cm}\left.+\left( \hat \lambda_i^{[k]}-\lambda_i^{[k]} \right) -\left( \hat \lambda_i^{[k+1]}-\lambda_i^{[k+1]} \right) \right)\notag\\
          &  \overset{\eqref{eq: error},\eqref{eq: lam}}{\leq} 2N\rho \left(\sqrt{n}M_z\Delta+2n\Delta^2 \right). \notag\\
           (b) &\quad  \sum_{i=1}^N \left( \frac{\hat \lambda_i^{[k+1]} - \hat \lambda_i^{[k]}}{2\rho}\right)^\top\left( \frac{\hat \lambda_i^{[k]} -\hat \lambda_i^{[k+1]}}{2} +\lambda_i^* -\hat\lambda_i^{[k]}\right)\notag\\
       &     \overset{\eqref{eq: well-known equality}}{=} \frac{1}{4\rho}\sum_{i=1}^N \left(\left\|\hat \lambda_i^{[k]}-\lambda_i^*\right\|^2 -\left\|\hat \lambda_i^{[k+1]}-\lambda_i^*\right\|^2\right).\label{eq: 23}\\
       (c)& \quad \frac{\rho}{2}\sum_{i=1}^N \left( \frac{\hat z_i^{[k+1]}+\hat z_i^{[k]}}{2}-z^*\right)^\top\left( \hat z_i^{[k]} - \hat z_i^{[k+1]} \right)\label{eq: 4+7}\\
         &   \overset{\eqref{eq: well-known equality}}{=} \frac{\rho}{4} \sum_{i=1}^N \left( \left\|\hat z_i^{[k]} - z^* \right\|^2 -\left\|\hat z_i^{[k+1]} - z^* \right\|^2 \right).\notag\\
         (d)&   \quad      \sum_{i=1}^N \left(\frac{\hat z_i^{[k+1]}+\hat z_i^{[k]} -2z^*}{2} \right)^\top \left(\frac{\hat \lambda_i^{[k]} - \hat\lambda_i^{[k+1]}}{2} \right)\label{eq: 5+8}\\
         &  \overset{\eqref{eq: error},\eqref{eq: lam}}{\leq} 2N\rho \left(\sqrt{n}M_z\Delta+2n\Delta^2 \right).\notag\\
         (e)&\quad     \sum_{i=1}^N \left(\frac{\hat z_i^{[k+1]}+\hat z_i^{[k]} -2z^*}{2} \right)^\top \left(\lambda_i^* -\hat \lambda_i^{[k]} \right)\label{eq: 6+9}\\
           &\overset{\eqref{eq: error},\eqref{eq: lam}}{\leq} 2N\rho \left(\sqrt{n}M_z\Delta+2n\Delta^2 \right).\notag
    \end{align}}

Note here that the analysis in \eqref{eq: 5+8} and \eqref{eq: 6+9} followed the same methodology as the one in \eqref{eq: 1}, but we omitted the details for space considerations. 
Considering now that $\hat z= \hat z_i$ for every agent $i \in \mathcal{V}$, and summing \eqref{eq: 1} -- \eqref{eq: 6+9}, we have that \eqref{eq: strongly convex3} becomes 
\vspace{-1mm}
\begin{equation}\label{eq: end}\small
\begin{split}
    &\sum_{i=1}^N \mu_i \left\| x_i^{[k+1]} -z^*\right\|^2
    \\
    \leq& \frac{1}{4} \hat{\mathcal L} \left(\hat z^{[k]}, \hat\lambda^{[k]}\right)-\frac{1}{4}\hat{\mathcal L} \left(\hat z^{[k+1]}, \hat\lambda^{[k+1]}\right)+ \mathcal O(N\sqrt{n} \Delta),
\end{split}
\end{equation}
where $\mathcal O(N \sqrt{n} \Delta) = 6\rho  M_z  N\sqrt{n}\Delta$. 
Note that in \eqref{eq: end} we omitted the higher-order terms of $\Delta$, as they are significantly smaller than the first-order terms and can be considered negligible. \ADD{Note that the existence of $\delta$ in \eqref{eq: condition} is the same as that in \eqref{eq: dela}, and therefore it is not shown here.}
 Finally, combining \eqref{eq: end} with \eqref{eq: condition}, we can obtain the following equation,
\begin{equation}\label{eq: range}\small
\begin{aligned}
   & \frac{\delta}{4}\hat{\mathcal L} \left(\hat z^{[k+1]}, \hat\lambda^{[k+1]} \right)\\
    \leq& \frac{1}{4}\hat{\mathcal L}\left(\hat z^{[k]}, \hat \lambda^{[k]} \right) - \frac{1}{4}\hat{\mathcal L}\left(\hat z^{[k+1]}, \hat \lambda^{[k+1]} \right)+ \mathcal O(N\sqrt{n}  \Delta),
\end{aligned}
\end{equation}
which is equivalent to \eqref{eq: reault1}.
This concludes the proof of our theorem.

            


\section{Proof of Theorem \ref{them: local convergence error1}}\label{APP: local convergence error 1}
The inner level of Algorithm~\ref{alg:Bilevel Consensus ALADIN} corresponds to \eqref{eq: QuDRC-ALADIN}, and thus its convergence properties follow directly from Theorem~\ref{the: convergence}.  
		We now establish the local convergence of the outer-level iterations in Algorithm~\ref{alg:Bilevel Consensus ALADIN}. Our analysis follows a similar approach to \cite[Theorem~3]{Du2025ACC}, \cite[Section 4.1]{Houska2021} and \cite{Du2019}, and for completeness, we provide the detailed derivation below.
        
Assume that the initial state $x_i$ of each agent lies in a small neighborhood of $z^*$. From the first-order optimality conditions of Problem~\eqref{eq: NLP}, we obtain:
\begin{equation}\label{eq: first order}
    \begin{cases}
        \nabla f_i(x_i^{[k+1]}) + \hat{\lambda}_i^{[k]} + \rho(x_i^{[k+1]} - \hat{z}_i^{[k]}) = 0, \\
        \nabla f_i(z^{*}) + \lambda_i^* = 0.
    \end{cases}
\end{equation}
Subtracting the two equations yields:
\begin{equation}\label{eq: important equality}\small
    \nabla f_i(z^{*}) - \nabla f_i(x_i^{[k+1]}) + \rho(z^{*} - x_i^{[k+1]}) = \rho(z^{*} - \hat{z}_i^{[k]}) + (\hat{\lambda}_i^{[k]} - \lambda_i^*).
\end{equation}
Under Assumption~\ref{ass:02}, there exists $\sigma > 0$ such that for each subproblem:
$\nabla^2 f_i(x_i^{[k+1]}) + \rho I \succeq \sigma I \succeq 0,$
which implies $\left\|\nabla^2 f_i(x_i^{[k+1]}) + \rho I\right\| \succeq \sigma$. Consequently, we have:
\begin{equation}\label{eq: important inequality}\small
   \left \|\nabla f_i(z^{*}) - \nabla f_i(x_i^{[k+1]}) + \rho(z^{*} - x_i^{[k+1]})\right\| \geq \sigma \left\|z^* - x_i^{[k+1]}\right\|.
\end{equation}
Substituting~\eqref{eq: important equality} into~\eqref{eq: important inequality} gives:
\begin{equation}\label{eq: con_ALADIN_step1}
    \frac{\rho}{\sigma}\left\|\hat{z}_i^{[k]} - z^{*}\right\| + \frac{1}{\sigma}\left\|\hat{\lambda}_i^{[k]} - \lambda_i^*\right\| \geq \left\|x_i^{[k+1]} - z^*\right\|, \quad \forall i \in \mathcal{V}.
\end{equation}

The inner level of Algorithm~\ref{alg:Bilevel Consensus ALADIN} introduces an error term \(\tilde\epsilon = \left\|\hat{z}_i^{[k+1]} - z^{[k+1]}\right\|\), where \(\hat{z}_i^{[k+1]}\) is generated by \eqref{eq: local copy update} and \(z^{[k+1]}\) is generated by the iteration in \eqref{eq: QuDRC-ALADIN}.
To simplify the subsequent analysis, we introduce the following notation. Following \cite[Chapter 8, Chapter 12]{diehl2016lecture}, we introduce a sufficiently small parameter $\gamma > 0$ such that $\|B_i^{[k+1]} - \nabla^2 f_i(x_i^{[k+1]})\| < \gamma$, and define $\bar{\Lambda}^{[k+1]} = \tilde\epsilon \sum_{i=1}^N \Lambda_{\max}(B_i^{[k+1]})$ and $\bar{\Lambda}^{[k]} = \tilde\epsilon \sum_{i=1}^N \Lambda_{\max}(B_i^{[k]})$, where $\Lambda_{\max}(\cdot)$ denotes the largest eigenvalue of a matrix. Following the methodologies presented in \cite[equation (12)]{Engelmann2020} and \cite[equation (31)]{Du2025ACC}, we establish the following inequalities that hold prior to convergence:
\begin{equation}\label{eq: step2-linear2}\small
    \left\{
    \begin{aligned}
   N\tilde\epsilon + N\left\|{z}^{[k+1]} - z^*\right\| &\leq \gamma\sum_{i=1}^{N}\left\|x_i^{[k+1]} - z^*\right\|, \\
        \bar{\Lambda}^{[k+1]} + \sum_{i=1}^{N}\left\|{\lambda}_i^{[k+1]} - \lambda_i^*\right\| &\overset{\eqref{eq: closed form}}{\leq} \gamma\sum_{i=1}^{N}\left\|x_i^{[k+1]} - z^*\right\|.
    \end{aligned}
    \right.
\end{equation}
From \eqref{eq: step2-linear2}, we derive the following inequality, 
    \begin{align}\label{eq: local error analysis}\small
        &\left( \frac{\rho N}{\sigma}\left\|\hat{z}_i^{[k+1]} - z^*\right\| \hspace{-1mm}+\hspace{-1mm} \frac{1}{\sigma}\sum_{i=1}^{N}\left\|{\hat\lambda}_i^{[k+1]} - \lambda_i^*\right\| \hspace{-1mm}+\hspace{-1mm} \frac{\rho N\tilde\epsilon + \bar \Lambda^{[k+1]}}{\sigma}\hspace{-0.5mm}\right)\notag\\
        &\overset{\eqref{eq: step2-linear2}}{\leq} \frac{(\rho + 1)\gamma}{\sigma}\sum_{i=1}^{N}\left\|x_i^{[k+1]} - z^*\right\|.
    \end{align}
Combining \eqref{eq: con_ALADIN_step1} with \eqref{eq: local error analysis}, we derive the following inequality:
   {\small \begin{align}\label{eq: local error analysis last}\small
        &\hspace{-1mm}\left(\hspace{-1mm} \frac{\rho N}{\sigma}\left\|\hat z_i^{[k+1]} - z^*\right\| + \frac{1}{\sigma}\sum_{i=1}^{N}\left\|\hat\lambda_i^{[k+1]} - \lambda_i^*\right\| + \frac{\rho N\tilde\epsilon + \bar{\Lambda}^{[k+1]}}{\sigma} \hspace{-0.5mm}\right)\notag \\
        \leq & \frac{(\rho + 1)\gamma}{\sigma} \left( \frac{\rho N}{\sigma}\left\|\hat{z}_i^{[k]} - z^{*}\right\| + \frac{1}{\sigma}\sum_{i=1}^{N}\left\|\hat{\lambda}_i^{[k]} - \lambda_i^*\right\| \right) \\
        \leq & \frac{(\rho + 1)\gamma}{\sigma} \hspace{-1mm}\left( \hspace{-1mm}\frac{\rho N}{\sigma}\left\|\hat z_i^{[k]} - z^{*}\right\| \hspace{-1mm}+ \frac{1}{\sigma}\sum_{i=1}^{N}\left\|\hat \lambda_i^{[k]} - \lambda_i^*\right\| \hspace{-1mm}+ \frac{\rho N\tilde\epsilon + \bar{\Lambda}^{[k]}}{\sigma} \hspace{-1mm}\right).\notag
    \end{align}}
We observe that the left-hand side of \eqref{eq: local error analysis} exhibits an identical structure to the right-hand side of \eqref{eq: local error analysis last}, differing only by the contraction factor $\frac{(\rho + 1)\gamma}{\sigma} < 1$ for sufficiently small $\gamma$. This contraction property establishes the local linear convergence of Algorithm~\ref{alg:Bilevel Consensus ALADIN} prior to convergence.

Note that if Algorithm~\ref{alg:Bilevel Consensus ALADIN} reaches the precision threshold $\epsilon$, inequality \eqref{eq: step2-linear2} no longer holds. This indicates that  Algorithm~\ref{alg:BFGS ALADIN2} ceases to pursue higher accuracy and instead converges to a neighborhood of the optimal solution with radius $\epsilon$. This completes the local convergence analysis of Algorithm~\ref{alg:Bilevel Consensus ALADIN}.

\section{Proof of Algorithm~\ref{alg: Decentralized Reduced BFGS Consensus ALADIN}}\label{APP: local convergence error 2}
The proof of Algorithm~\ref{alg: Decentralized Reduced BFGS Consensus ALADIN} follows similarly to that in Appendix~\ref{APP: local convergence error 1}, with the only distinction being the definition of \(\tilde{\epsilon} = \left\| \hat{z}_i^{[k+1]}-z^{[k+1]} \right\|\), where \(\hat{z}_i^{[k+1]}\) is generated via \eqref{eq: primal_dual1} and \(z^{[k+1]}\) is generated by the inner level of Algorithm \ref{alg:Bilevel Consensus ALADIN}. Consequently, we focus solely on deriving an upper bound for \(\tilde{\epsilon}\).
Define $\bar{B} = \frac{1}{N} \sum_{i=1}^N B_i^{[k+1]}$ and $\bar{G} = \frac{1}{N} \sum_{i=1}^N g_i$. From \eqref{eq: primal_dual1}, we obtain the error bound:

\begin{equation}\label{eq: error bd}\small
    \begin{aligned}
      &  \left\| \hat{z}_i^{[k+1]} - z^{[k+1]} \right\| \\
        \overset{\eqref{eq: x AVG},\eqref{eq: g avg},\eqref{eq: primal_dual1},\eqref{eq: closed form}}{\leq} &
        \left\| \bar{x}_i^{[k+1]} - \bar{B}^{-1} \cdot \frac{1}{N} \sum_{i=1}^N B_i^{[k+1]} x_i^{[k+1]} \right\| \\
        &+ \left\| H_i^{[k+1]} l_i^{[k+1]} - \bar{B}^{-1} \bar{G} \right\|.
    \end{aligned}
\end{equation}
Here $\bar{x}_i^{[k+1]}$ is generated from \eqref{eq: x AVG}, implying that $\bar{x}_i^{[k+1]}$ is identical for all $i$.

The first term on the right-hand side of \eqref{eq: error bd} can be bounded as:
  \small{\begin{align}\label{eq: perturbation1}
        & \left\| \bar{x}_i^{[k+1]} - \frac{1}{N} \bar{B}^{-1} \sum_{i=1}^N B_i^{[k+1]} x_i^{[k+1]} \right\| \\
         \leq &\left\| \bar{B}^{-1} \right\| \left\| \frac{1}{N} \sum_{i=1}^N B_i^{[k+1]} \left( \bar{x}_i^{[k+1]} - x_i^{[k+1]} \right) \right\| \notag\\
        \overset{\eqref{eq: error}}{\leq}& \hspace{-0.5mm}\Lambda_{\text{max}}(\bar{B}^{-1}) \Lambda_{\text{max}}(B_i^{[k+1]}) 
        \hspace{-1mm}\left( \hspace{-0.5mm}\left\| \frac{1}{N} \sum_{i=1}^N \bar x_i^{[k+1]} - x_i^{[k+1]} \right\| + 2\sqrt{n}\Delta\hspace{-1mm} \right) \notag\\
          &\leq \tilde{\epsilon}_1 \notag.
    \end{align}}  

In the second term of \eqref{eq: error bd}, noting that $l_i^{[k+1]}$ is generated from \eqref{eq: g avg}, we have
\begin{equation}\label{eq: gradient est}\small
    \begin{split}
        & \left\| l_i^{[k+1]} - \frac{1}{N} \sum_{i=1}^N g_i \right\| \\
         \quad \overset{\eqref{eq: hessian estimation},\eqref{eq: error}}{\leq} &
        2\sqrt{n}\Delta + \left\| \frac{1}{N} \sum_{i=1}^N  \nabla f_i(\bar{x}_i^{[k+1]}) - \nabla f_i(x_i^{[k+1]})  \right\| \\
         \quad \overset{\eqref{eq: lip}}{\leq} &
        2\sqrt{n}\Delta + \frac{1}{N} \sum_{i=1}^N L_i\left\| \bar{x}_i^{[k+1]} - x_i^{[k+1]} \right\| \leq \hat{\epsilon}.
    \end{split}
\end{equation}
We further assume that for $H_i^{[k+1]}$ in \eqref{eq: hessian estimation},
\begin{equation}\label{eq: additional ass} \small
    \left\| \left(H_i^{[k+1]}\right)^{-1} - \bar{B} \right\| < \frac{1}{\left\| \left(H_i^{[k+1]}\right)^{-1} \right\|} = M_\epsilon,
\end{equation}
Applying perturbation theory for linear systems \cite[Theorem 7.29]{burden2010numerical} together with \eqref{eq: gradient est}, the second term in \eqref{eq: error bd} is bounded as follows,
\begin{equation}\label{eq: perturbation2}\small
    \begin{aligned}
        & \left\| H_i^{[k+1]} l_i^{[k+1]} - \bar{B}^{-1} \bar{G} \right\| \\
         \quad \overset{\eqref{eq: gradient est}}{\leq} &
        \frac{\mathrm{cond}(\bar{B}) \| z^{[k+1]} \|}{1 - \mathrm{cond}(\bar{B}) \frac{M_\epsilon}{\| \bar{B} \|}} 
        \left( \frac{\hat{\epsilon}}{\| \bar{G} \|} + \frac{M_\epsilon}{\| \bar{B} \|} \right) \leq \tilde{\epsilon}_2,
    \end{aligned}
\end{equation}
where $\mathrm{cond}(\cdot)$ denotes the condition number of a matrix.

Substituting the bounds from \eqref{eq: perturbation1} and \eqref{eq: perturbation2} into \eqref{eq: error bd}, we obtain the combined error estimate:
\begin{equation}\label{eq: last error bound}\small
\left\|\hat{z}_i^{[k+1]} - z^{[k+1]} \right\|\leq \tilde{\epsilon}_1 + \tilde{\epsilon}_2 = \tilde{\epsilon}.
\end{equation}
If the error bound $\tilde{\epsilon}$ from \eqref{eq: last error bound} satisfies the condition in \eqref{eq: step2-linear2}, then Algorithm~\ref{alg: Decentralized Reduced BFGS Consensus ALADIN} converges to a neighborhood of the optimal solution \(z^*\), thereby establishing local convergence; otherwise, convergence to this neighborhood is not guaranteed. This completes the proof of Algorithm~\ref{alg: Decentralized Reduced BFGS Consensus ALADIN}.


\section*{References}	\vspace{-6mm}
\bibliographystyle{IEEEtran}
\bibliography{paper}

\end{document}